\documentclass[11pt,reqno,twoside,makeidx]{amsart}
\usepackage[margin=2.7cm]{geometry}

\usepackage[english]{babel}
\usepackage{csquotes}

\usepackage{caption}
\usepackage{graphicx}
\usepackage{dsfont}
\usepackage{amsthm}
\usepackage{amsmath}
\usepackage{amsfonts}
\usepackage{amscd}
\usepackage{fancyhdr}
\usepackage{enumerate}
\usepackage{enumitem}
\usepackage{url}
\usepackage{lipsum}
\usepackage{hyperref} 

\usepackage{tikz}
\usepackage{tikz-cd}
    \usetikzlibrary{cd}
    \usetikzlibrary{arrows}
    \usetikzlibrary{matrix}
    \usepackage{pictexwd,dcpic}
    \usepackage{sidecap}
    \graphicspath{./imagine/}

\usepackage{caption}
\usepackage{subcaption}

\usepackage{cancel}
\usepackage{import}

\usepackage{amssymb}

\usepackage{mathtools}
\usepackage{layout}

\usepackage{booktabs}
\usepackage{array}
\usepackage{seqsplit}
\newcolumntype{L}[1]{>{\raggedright\arraybackslash}p{#1}}

\newcommand{\fund}[1]{ \pi_1 \left( #1 \right)}
\newcommand{\compl}[1]{E \left( #1 \right)}

\newcommand\restr[2]{{
  \left.\kern-\nulldelimiterspace 
  #1 
  \vphantom{|} 
  \right|_{#2} 
  }}

 \newcommand{\Ui}{U(1)}
 \newcommand{\normalsubgroup}[1]{\langle\! \langle #1 \rangle \! \rangle}
\newcommand{\R}[1]{\mathcal{R}_{U(1)} \left( #1\right)}
\newcommand{\geo}[2]{\Delta \left( #1 ,#2\right)}

\tikzset{cross/.style={cross out, draw=black, minimum size=2*(#1-\pgflinewidth), inner sep=0pt, outer sep=0pt},
cross/.default={1pt}}

\DeclareMathSymbol{\shortminus}{\mathbin}{AMSa}{"39}

\usetikzlibrary{decorations.pathmorphing}

\usepackage{mathrsfs}

\usepackage{multicol}
\usepackage{lscape}
\usepackage{cjhebrew}

\usepackage{bbold}

\usepackage{graphicx,nicefrac}
 \usepackage{framed}
\newcommand{\nequiv}{\not\equiv}

\usepackage{algorithm}
\usepackage{algpseudocode}
\algtext*{EndFor}
\algtext*{EndIf}
\usepackage{tikz}
\makeatletter
\newtheorem*{rep@teo}{\rep@title}
\newcommand{\newreptheorem}[2]{%
\newenvironment{rep#1}[1]{%
 \def\rep@title{#2 \ref{##1}}%
 \begin{rep@teo}}%
 {\end{rep@teo}}}
 \newreptheorem{teo}{Theorem}
 \newreptheorem{cor}{Corollary}
 \newreptheorem{prop}{Proposition}
 \newreptheorem{defn}{Definition}
 \newreptheorem{lemma}{Lemma}
 \newreptheorem{condition}{Condition}

\theoremstyle{definition}
\newtheorem{defn}{Definition}[section]

\newtheorem{exmp}[defn]{Example}
\newtheorem{rmk}[defn]{Remark}
\newtheorem{notation}[defn]{Notation}

\newtheorem*{PN*}{Please Note}

\theoremstyle{plain}
\newtheorem{teo}[defn]{Theorem}
\newtheorem*{teo*}{Theorem}
\newtheorem{cor}[defn]{Corollary}
\newtheorem{lemma}[defn]{Lemma}
\newtheorem{prop}[defn]{Proposition}

\newtheorem*{prob*}{Problem}

\newtheorem*{qst*}{Question}

\numberwithin{equation}{section}
\makeatother

\usepackage[backend=bibtex, style=alphabetic]{biblatex}
\usepackage{overpic}

\title[Some knots with no Su(2)-Abelian Surgeries]{Some knots with no Su(2)-Abelian Surgeries}
\author{Giacomo Bascape}

\begin{document}

\begin{abstract}
A knot in $S^3$ is said to be \emph{not} $SU(2)$-abelian knot, if every non-trivial
surgery along it yields a $3$-manifold whose fundamental group
admits an irreducible $SU(2)$-representation.
We provide examples of knots in $S^3$ that are not $SU(2)$-abelian.
A knot $K$ is said to be $SU(2)$-\emph{clean} if
whenever the fundamental group of the integer $r$-surgery $\fund{K(r)}$ has 
no $SU(2)$-irreducible representation,
then the Alexander polynomial of $K$ does not vanish at any $r$-th root of unity.
We show that if $K$ is a non-trivial $SU(2)$-clean knot,
the connected sum $K\#K$ is never $SU(2)$-abelian.
Finally, combining known results on $SU(2)$-abundant knots
and a classical epimorphism between knot groups,
we show that \emph{every} non-torus knot with at most $9$ crossings is not $SU(2)$-abelian,
with at most the two exceptions of $9_{47}$ and $9_{49}$.
\end{abstract}

\maketitle

\section{Introduction}
In this work, we consider only compact and oriented $3$-manifolds. 
For a given property $\mathcal{P}$ of closed $3$-manifolds, we say that a knot in $S^3$ is a $\mathcal{P}$-knot if there exists a non-trivial surgery on it yielding a $3$-manifold with property $\mathcal{P}$. 
For instance, the right-handed trefoil $3_1$ is an \emph{L-space knot}, since the $1$-surgery on it yields a Heegaard Floer L-space.
For the definition of Heegaard Floer homology, we refer the reader to \cite{HFdef2} and \cite{HFdef1}. 
A natural question in knot theory is to characterize which knots in $S^3$ are $\mathcal{P}$-knots. 

Among the various properties that a $3$-manifold may possess, those defined in terms of $SU(2)$-representations of the fundamental group have proven to be particularly deep and powerful. 
The study of $SU(2)$-representations plays a central role in low-dimensional topology, providing access to subtle geometric and topological features. 
A classical result such as the proof of Property P \cite{KronheimerMrowkaDehnSurgeryFundamental}
highlights the strength of $SU(2)$-representation techniques in distinguishing $3$-manifolds and detecting essential topological phenomena. 
In this perspective, understanding when a $3$-manifold is $SU(2)$-abelian, meaning that its fundamental group admits only abelian $SU(2)$-representations, provides a natural direction of inquiry.
Therefore, in this work, we take $\mathcal{P}$ to be the property of being $SU(2)$-abelian and, in particular, we shall study knots that are \emph{not} $SU(2)$-abelian;
said differently, we will study knots in $S^3$ such that every non-trivial surgery yields a $3$-manifold that is not $SU(2)$-abelian.

Torus knots admit infinitely many surgeries with cyclic fundamental group,
and therefore they are examples of $SU(2)$-abelian knots. 
Conversely, iterated torus knots $C_{p_1,q_1}C_{p_2,q_2}(T_{p_3,q_3})$ with $p_i \ge 2$ are examples of knots that are not $SU(2)$-abelian by \cite[Theorem 4.1]{Sivek_Zentner}.
As a general fact, it is worth mentioning that if a knot $K$ is not $SU(2)$-abelian, then it is \emph{abundant} in the sense of \cite[Definition 1.1]{SU2AbundantLargeFormula}.

We expect composite knots in $S^3$, namely those obtained as the connected sum of two non-trivial knots, to be non-$SU(2)$-abelian knots. Indeed, it is known that composite knots are not L-space knots \cite[Corollary 3.11]{CompositeKnotNLS}.
Moreover, Zhang's conjecture \cite[Conjecture~4]{zhang2019remarks} predicts that a non-L-space knot should be non-$SU(2)$-abelian.
In light of these results, one naturally expects that composite knots provide examples of knots that are not $SU(2)$-abelian.
This work provides a partial answer to this question.

To obtain a concrete result, we introduce an additional hypothesis. Namely, we consider $SU(2)$\emph{-clean} knots and prove
that for a non-trivial knot $K \subset S^3$, the connected sum $K^2=K\#K$ is not an $SU(2)$-abelian knot.
A knot $K$ is called $SU(2)$-clean if, whenever the integer $r$-surgery
on $K$ is $SU(2)$-abelian, the Alexander polynomial of $K$ does not vanish at any $r$-th root of unity.

\begin{teo}\label{teo: double knot}
    Let $K \subset S^3$ be a non-trivial $SU(2)$-clean knot. Then $K^2=K\#K$ is not an $SU(2)$-abelian knot.
\end{teo}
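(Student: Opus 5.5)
The plan is to build irreducible representations of $\pi_1(E(K\#K))$ out of representations of the two copies of $\pi_1(E(K))$, glued along the common meridian. Recall that $\pi_1(E(K\#K))$ is the amalgamated product of two copies of $\pi_1(E(K))$ over the meridian subgroup, and the Seifert longitude of $K\#K$ is $\lambda=\lambda_1\lambda_2$. So any pair $\rho_1,\rho_2\colon \pi_1(E(K))\to SU(2)$ with $\rho_1(\mu)=\rho_2(\mu)$ defines a representation $\rho$ of $\pi_1(E(K\#K))$. This $\rho$ is irreducible as soon as one of the $\rho_i$ is. After conjugating, put $\rho_i(\mu)=e^{i\alpha}$ and $\rho_i(\lambda_i)=e^{i\beta_i}$ in the maximal torus. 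Then $\rho$ descends to the $p/q$-surgery exactly when $p\alpha+q(\beta_1+\beta_2)\equiv 0 \pmod{2\pi}$. In pillowcase terms, the image of the irreducible representations of $E(K\#K)$ contains the ``sum'' of the pillowcase images of $E(K)$ with itself and with the abelian arc $\beta=0$.

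\textbf{Step 1 (reduction to bad slopes of $K$).} Take $\rho_2$ abelian, so that $\rho_2(\lambda_2)=1$, and $\rho_1$ any irreducible representation of $\pi_1(K(r))$. This gives an irreducible representation of $\pi_1((K\#K)(r))$. Hence $(K\#K)(r)$ fails to be $SU(2)$-abelian whenever $K(r)$ does, and it remains to treat slopes $r=p/q$ for which $K(r)$ itself is $SU(2)$-abelian. By Kronheimer--Mrowka, such slopes satisfy $|r|>2$, since $K$ is non-trivial. I would also invoke the known constraints on $SU(2)$-abelian slopes (e.g.\ those of Sivek--Zentner) to reduce to integer $r$, or else treat non-integer $r$ by the same pillowcase argument below.

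\textbf{Step 2 (doubling).} Take $\rho_1=\rho_2=\rho$, so the point $(\alpha,\beta)$ of $K$ becomes $(\alpha,2\beta)$ for $K\#K$. Also take $\rho_1=\rho$ and $\rho_2=\rho'$ from different components of the representation variety, which gives points $(\alpha,\beta+\beta')$. Kronheimer--Mrowka's holonomy perturbation argument gives the following: for a non-trivial knot, the irreducible locus in the pillowcase is nontrivial in the relevant homology relative to the edges $\alpha\in\{0,\pi\}$. Roughly, it contains a path crossing every vertical line $\alpha=\alpha_0\in(0,\pi)$. The line $L_{p/q}$ of slope $-p/q$ through the corner is missed by the curve of $K$, because $K(r)$ is $SU(2)$-abelian. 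I would show that the stretched curve $\beta\mapsto 2\beta$, or the sum set, cannot also miss $L_{p/q}$. This is an intermediate-value argument along the path, comparing the endpoints of the path on the reducible arc $\beta=0$ with where $L_{p/q}$ meets that arc.

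\textbf{Step 3 (role of cleanness, the main obstacle).} The endpoints of the irreducible path on $\beta=0$ occur at $\alpha$ with $\Delta_K(e^{2i\alpha})=0$. The line $L_r$ meets the arc $\beta=0$ at $\alpha=2\pi k/r$. Cleanness says these sets are disjoint when $K(r)$ is $SU(2)$-abelian, so the path does not end on $L_r$. One must also exclude that an intersection of the doubled curve with $L_r$ is merely a reducible point, i.e.\ a binary dihedral or abelian degeneration. Again cleanness guarantees the intersection lies in the interior of the irreducible locus. The hardest part is the topological bookkeeping: showing that the winding of the original curve between $L_r$'s consecutive crossings of $\beta=0$ forces the curve stretched by the factor $2$ to cross $L_r$. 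I would first try a mod-$2$ intersection count in the pillowcase, using that the curve of $K$ has zero algebraic intersection with $L_r$ (since it misses it) while the doubled curve has odd intersection with $L_r$.
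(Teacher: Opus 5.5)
Your treatment of integer slopes is essentially the paper's. The doubled point $(\alpha,2\beta)$ lies on $L_r^0$ exactly when $r\alpha+2\beta\equiv 0$, which is the fixed-point set of the gluing map $\varphi^\ast$ used in the paper. Klassen's limit result together with cleanness keeps the endpoints of a good arc off $L_r^0\cap\{\psi=0\}$. The intermediate value theorem in the strip between consecutive components of $L_r^0$ then gives an interior crossing, which is therefore irreducible. Drop the mod-$2$ count: $\Gamma$ is an arc, so its intersection number with $L_r^0$ is not a homotopy invariant. The right bookkeeping is to lift to $\mathbb{R}^2$ and track $r\theta+\psi$ and $r\theta+2\psi$ along the arc.

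\textbf{The gap: non-integral slopes $p/q$.} The reduction you propose is not available. No known result forces an $SU(2)$-abelian slope of a single knot to be integral; for instance $T_{p,q}(pq+1/m)$ is $SU(2)$-abelian for every $m\neq 0$. Nor can Step 3 be repeated verbatim, because cleanness only constrains $\Delta_K$ at the roots of unity attached to \emph{integer} $SU(2)$-abelian slopes.

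The paper closes this gap by using the doubling construction itself as a second constraint. If $K^2(r/s)$ is $SU(2)$-abelian, then so are $K(r/s)$ and $K(r/2s)$. Sivek--Zentner's distance bounds for pairs of $SU(2)$-abelian slopes then force $|s|=1$ when $r$ is odd and $|s|\in\{1,3\}$ when $r$ is even, and $|s|=3$ is handled by a separate pillowcase argument.

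Your own lifting idea also does most of this directly. Along a lift of a good arc from $\psi=0$ to $\psi=2\pi$, set $f=p\theta+q\psi$ and $g=p\theta+2q\psi$. If $K(p/q)$ is $SU(2)$-abelian, $f$ avoids $2\pi\mathbb{Z}$ on the interior, so $|f(1)-f(0)|\le 2\pi$. Meanwhile $g-f=q\psi$ changes by $\pm 2\pi q$, so $|g(1)-g(0)|\ge 2\pi(|q|-1)$. For $|q|\ge 3$, $g$ therefore hits $2\pi\mathbb{Z}$ at an interior point, with no use of cleanness. For $|q|=2$ this argument fails exactly when both endpoints of $\Gamma$ lie on $L_{p/q}$. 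Cleanness cannot exclude that, so that case needs an extra input (in the paper, Sivek--Zentner's bound for $r$ odd), which your proposal does not supply.
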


The $SU(2)$-clean condition appears naturally in the theory.
For instance, if $K$ is a Berge knot and $r \in \mathbb{Z}$ is a cyclic surgery slope
for $K$, then the Alexander polynomial of $K$
does not vanish at any $r$-th root of unity (see Theorem \ref{thm:berge-clean}).
More generally, every cyclic surgery is $SU(2)$-abelian, while cyclic surgery slopes are
necessarily integral by the Cyclic Surgery Theorem of Culler, Gordon, Luecke, and Shalen.
In this sense, the $SU(2)$-clean condition may be viewed as a natural extension of a phenomenon already present
in the classical theory of cyclic surgeries.
At present, the author is not aware of any knot that is not $SU(2)$-clean.
On the contrary, as shown in Corollary \ref{cor: some examples}, torus knots, $2$-bridge knots, and the particularly important pretzel knot $P(-2,3,7)$ are $SU(2)$-clean.
Moreover, any knot that admits no non-trivial $SU(2)$-abelian surgery is automatically $SU(2)$-clean.

Let $K$ be a knot in $S^3$, we denote by $E(K)$ the manifold $S^3 \setminus \nu(K)$, where $\nu(K)$ is an open tubular neighborhood of the knot $K$.
For a given knot $K \subset S^3$, the space $T(K,\partial)$ is defined as the set of representations $\fund{\partial \compl{K}} \to \Ui$ that extend to $\fund{\compl{K}}$.
Here $\Ui$ is the subgroup of diagonal matrices of $SU(2)$. Details can be found in \cite{Mino}.
In particular, $T(K,\partial)$ makes it possible to study $SU(2)$-representations
via geometric methods, by looking at subsets of the torus $\mbox{Hom}(\fund{\partial \compl{K}},U(1))$,
without resorting to gauge theory or instanton Floer homology.

Given two knots $K_1$ and $K_2$ in $S^3$, we define $K_{\#} \subset S^3$ as the composite knot $K_1\#K_2$.
For $i \in \{1,2,\#\}$, let $\{\mu_i,\lambda_i\}$ be an ordered basis of meridian and null-homologous longitude of $K_i$.
The fundamental group of $\compl{K_\#}$ admits a relatively easy presentation:
\begin{align}
\label{eq: presentazione composite knot}
    \fund{\compl{K_\#}} = \frac{\fund{\compl{K_1}}\ast \fund{\compl{K_2}}}{\normalsubgroup{\mu_1=\mu_2=\mu_\# , \lambda_\#=\lambda_1 \lambda_2}}.
\end{align}
The presentation \eqref{eq: presentazione composite knot} can be used to describe explicitly 
the invariant $T(K_1 \# K_2,\partial)$ as shown in Lemma \ref{lemma: descriviamo composite knots}.
Theorem \ref{teo: double knot} is proven within the study of $T(K\#K,\partial)$.

Furthermore, the invariant $T(K,\partial)$ is exhaustively described for torus knots in \cite[Section I.B]{EpKlassenSu2} and \cite{Mino}. Therefore, if $K_1$ and $K_2$ are both torus knots, the invariant $T(K_1 \# K_2,\partial)$ is completely determined by Lemma \ref{lemma: descriviamo composite knots}. As a consequence, we can prove the following proposition:

\begin{cor}\label{cor: composite torus knots are not SUa}
    Let $T_{p_1,p_2}$ and $T_{p_3,p_4}$ be two non-trivial torus knots, then $T_{p_1,p_2} \# T_{p_3,p_4}$ is not an $SU(2)$-abelian knot.
\end{cor}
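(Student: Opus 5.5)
We plan to use the description of $T(K_1\# K_2,\partial)$ from Lemma \ref{lemma: descriviamo composite knots}, combined with the explicit form of $T(T_{p,q},\partial)$ from \cite[Section I.B]{EpKlassenSu2} and \cite{Mino}. Let $K_\# = T_{p_1,p_2}\# T_{p_3,p_4}$ and let $r=a/b$ be a non-trivial slope. An irreducible $SU(2)$-representation of $\fund{K_\#(r)}$ exists as soon as some representation $\rho$ of $\fund{\compl{K_\#}}$ is non-abelian and satisfies $\rho(\mu^a\lambda^b)=1$. Conjugating so that $\rho$ on the peripheral torus lies in $\Ui$, this says that the line $a\alpha+b\beta\in 2\pi\mathbb{Z}$ meets the set of boundary holonomies $(\alpha,\beta)$ of non-abelian representations. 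That set is $T(K_\#,\partial)$ away from the abelian locus $\beta\equiv 0$.

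The key steps are as follows.

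\begin{enumerate}
\item Recall the torus-knot case. For $T_{p,q}$, the boundary images of irreducible representations are finitely many open segments of slope $-pq$ in the $(\alpha,\beta)$-torus. On each segment $\beta = -pq\,\alpha + c$, the constant $c$ and the range of $\alpha$ are determined by pairs $(k,l)$ with $0<\alpha$-values strictly between $\pi k/p$-type endpoints.

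\item Describe the connected sum. By the presentation \eqref{eq: presentazione composite knot} and Lemma \ref{lemma: descriviamo composite knots}, a point lies in $T(K_\#,\partial)$ via a non-abelian representation in either of two ways.
\begin{itemize}
\item One summand is irreducible and the other is abelian. This gives a segment of slope $-p_1p_2$ or $-p_3p_4$, shifted by a multiple of $2\pi$, because an abelian summand has trivial longitude.
\item Both summands are irreducible. Then $\beta=\beta_1+\beta_2$ over a common $\alpha$, and the two $\Ui$ images can also be twisted relative to each other about the meridian axis. This yields two-dimensional regions: sums of segments of slopes $-p_1p_2$ and $-p_3p_4$ over a common $\alpha$-interval.
\end{itemize}

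\item Show that every line $a\alpha+b\beta\in 2\pi\mathbb{Z}$ with $(a,b)\neq(1,0)$ meets one of these sets. The cases are:
\begin{itemize}
\item $b=0$ ($\infty$ excluded as trivial). Nothing is needed here.
\item $a/b$ different from both $-p_1p_2$ and $-p_3p_4$. The line is transverse to some segment from the first type. Since each segment has $\beta$ ranging over an interval of length at least comparable to $2\pi$ after the shift, an intermediate value argument along the segment gives an intersection, as in the standard argument for torus knots that non-cyclic slopes carry irreducibles.
\item $r$ equal to one of the exceptional slopes $pq\pm 1/n$, or in general near $p_ip_{i+1}$, where a summand-type segment could be parallel or cyclic-like. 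Here we use the second type of region instead. Because it is two-dimensional, it contains an open set, and its $\beta$-extent over fixed $\alpha$ is an interval of positive length. We choose $\alpha$ in the overlap of the two $\alpha$-ranges and vary the twist so that $\beta$ hits a value with $a\alpha+b\beta\in2\pi\mathbb{Z}$.
\end{itemize}
\end{enumerate}

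The main obstacle is the last case. We must verify that the $\alpha$-intervals of some irreducible segment for $T_{p_1,p_2}$ and some for $T_{p_3,p_4}$ overlap. We must also verify that the resulting band of $\beta$ values is wide enough to meet the family of parallel lines for every slope $a/b$, which in the worst case requires width at least $2\pi/|b|$ at fixed $\alpha$. Alternatively, we can check directly that the sum-region contains a segment transverse to the line. To handle this, we first pick the segments with $k=l=1$ for each torus knot, whose $\alpha$-ranges both contain a neighbourhood of $\pi/2$-ish values. We then use that the sum region, being a Minkowski-type sum of two non-parallel-in-general curves (or, when $p_1p_2=p_3p_4$, an honest strip), cannot avoid an entire parallel family of lines.
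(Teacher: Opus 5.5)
Your argument fails in the one case that actually requires work: slopes at which both summands have $SU(2)$-abelian surgeries (for instance $14$ and $29/2$ for $T_{2,7}\#T_{3,5}$, or $p_1p_2+1/m$ when $p_1p_2=p_3p_4$). There you rely on the locus of representations irreducible on both summands being two-dimensional, because $\rho_2$ can be twisted about the meridian axis. That twist is conjugation of $\rho_2$ by an element of the centralizer of $\rho(\mu)$, i.e.\ by a diagonal matrix when $\rho(\mu)\neq\pm1$. Since $\rho_2(\lambda_2)$ commutes with $\rho(\mu)$, it is itself diagonal and is unchanged by this conjugation, so $\beta_\#=\beta_1+\beta_2$ does not move. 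Bending enlarges $\chi(K_\#)$ but not its image in the pillowcase. This is exactly why Lemma \ref{lemma: descriviamo composite knots} describes $T(K_\#,\partial)$ as a fibre sum over a common $\theta$. For torus-knot summands the doubly-irreducible part is a finite union of segments of slope $-(p_1p_2+p_3p_4)$, not a strip even when $p_1p_2=p_3p_4$. So there is no open set, and no interval of $\beta$-values at fixed $\alpha$ to sweep through.

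Your second case has a related flaw. Along a segment of slope $-N$ over an $\alpha$-interval of length $\ell$, the quantity $a\alpha+b\beta$ moves only by $|a-bN|\,\ell$, so transversality plus a long $\beta$-range forces nothing. For the lens-space slopes $pq+1/m$ of $T_{p,q}$ (with $N=pq$) this is $\ell<2\pi$, and indeed those lines cross no segment of $H_{T_{p,q}}$. The generic slopes are correctly disposed of by $H_{K_i}\subset H_{K_\#}$, i.e.\ Proposition \ref{prop: satellite P(J) ammette una chirugia SU(2)-abelian then}, together with Theorem \ref{thm:iterated-cables}.

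Your fallback, a transverse doubly-irreducible segment, would still need a length estimate you have not supplied. The missing idea, which is what the paper uses, is to combine the two kinds of points into a closed loop rather than use either kind alone. Take an arc of $H_{T_{p_3,p_4}}$ over $\theta\in[x_1,x_3]$ that starts and ends on the abelian line, so that $\psi_2$ changes by $2\pi w$ with $w\neq0$. Take also an arc $\theta\mapsto\psi_1(\theta)$ of $H_{T_{p_1,p_2}}$ defined on an interval containing $[x_1,x_3]$; this nesting is what the paper extracts from Lemma \ref{lemma: stepping stone} through the points $x_1<x_2<x_3<x_4$. Consider two paths:
\begin{itemize}
\item $(\theta,\psi_1(\theta))$, with the second summand abelian;
\item $(\theta,\psi_1(\theta)+\psi_2(\theta))$, with both summands irreducible.
\end{itemize}
Both lie in $H_{K_\#}$ by Lemma \ref{lemma: descriviamo composite knots} and they share endpoints. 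Together they form a closed loop in $H_{K_\#}$ homologous to $\pm w$ times a circle $\{\theta=\mathrm{const}\}$. Such a loop meets $L^0_{a/b}$ for every $b\neq0$. Lift both paths from a common initial point: $a\theta+b\psi$ then has the same initial value on both, but terminal values differing by $2\pi bw$. Hence it cannot stay in a single interval $(2\pi n,2\pi(n+1))$ along both paths. This argument does not depend on which slope is being tested, so it needs neither a case analysis on $a/b$ nor a length estimate.
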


The final part of this paper is devoted to the problem of detecting non-$SU(2)$-abelian knots among knots
with small crossing number. A quick search in KnotInfo \cite{knotinfo}
shows that every L-space knot with at most nine crossings is a torus knot.
Since torus knots are classical examples of $SU(2)$-abelian knots,
this suggests that the remaining knots with at most $9$ crossings should fail to be $SU(2)$-abelian.
By combining the results on $SU(2)$-abundant knots in \cite{SU2AbundantLargeFormula} with the existence of epimorphisms established in \cite{ApartialOrderInTheKnotTableII},
we obtain the following result, leaving only two exceptions.
\begin{cor}\label{cor: knots up to 9 crosssings}
    Let $K$ be a knot with at most $9$ crossings. If $K$ is neither a torus knot nor in $\{9_{47},9_{49}\}$,
    then
    $K$ is not $SU(2)$-abelian.
\end{cor}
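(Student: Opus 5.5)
The proof is a case-by-case sweep through the knot table, driven by two tools. The first is the abundance criterion of \cite{SU2AbundantLargeFormula}. It shows that for many small knots every non-trivial Dehn surgery admits an irreducible $SU(2)$-representation, so these knots are not $SU(2)$-abelian. The second is monotonicity under epimorphisms. Suppose there is an epimorphism $\varphi\colon\fund{\compl{K}}\to\fund{\compl{J}}$ sending a meridian to a meridian and a longitude to a power of the longitude; the epimorphisms in \cite{ApartialOrderInTheKnotTableII} are of this kind, and any meridian-preserving epimorphism of knot groups has this longitude property up to the standard degree argument. Then $\varphi$ descends to an epimorphism $\fund{K(r)}\to\fund{J(r')}$ for the appropriate corresponding slope $r'$. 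In the degree-one case $r'=r$.

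The key steps, in order, are as follows.

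\emph{Descent lemma.} First I would prove the descent lemma: if $\fund{K}\twoheadrightarrow\fund{J}$ preserves peripheral structure and $J$ is not $SU(2)$-abelian, then $K$ is not $SU(2)$-abelian. The argument is that an irreducible representation of $\fund{J(r)}$ pulls back along the surjection to an irreducible representation of $\fund{K(r)}$, since surjectivity preserves non-abelian image. The only care needed is the slope bookkeeping when the longitude maps to a power $\lambda_J^d$. For the epimorphisms listed by Kitano–Suzuki at $\le 9$ crossings, the maps are induced by degree-one-type constructions, so slopes correspond exactly. Where they do not, I would restrict to the ones that do.

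\emph{Base set.} Next I would assemble the base set of knots known to be non-$SU(2)$-abelian. This consists of the knots certified directly by \cite{SU2AbundantLargeFormula} (for instance the two-bridge and other small knots treated there), together with the composite knots, which are handled by Theorem~\ref{teo: double knot} and Corollary~\ref{cor: composite torus knots are not SUa}. In the prime-knot table up to $9$ crossings, the composites play no role, since the table lists primes.

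\emph{Covering the table.} For each remaining non-torus prime knot with at most $9$ crossings that is not directly certified, I would locate in the Kitano–Suzuki partial order an epimorphism onto a certified knot. Typically the targets are $3_1$-related knots or two-bridge knots already handled. Note that targets must be non-torus: an epimorphism onto the trefoil is useless, because the trefoil is $SU(2)$-abelian. So I would need epimorphisms onto certified non-torus knots such as $4_1$ or $5_2$.

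The main obstacle is exactly this last step. Many $9$-crossing knots admit epimorphisms only onto $3_1$, or onto nothing at all, and for those the argument must rely entirely on the abundance results. I expect the bookkeeping to leave $9_{47}$ and $9_{49}$ uncovered, since neither appears in the abundance list nor surjects onto a certified knot. Hence these two are the exceptions in the statement. I would verify the coverage with a table pairing each knot with its certificate.
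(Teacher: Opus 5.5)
There is a genuine gap, and it is exactly at the step you flagged as the main obstacle. First, abundance is not a certificate by itself. It only says that if $K(p/q)$ is $SU(2)$-abelian, then $\Delta_K(\zeta^2)=0$ for some $p$-th root of unity $\zeta$ (and that there are only finitely many such slopes). The paper turns this into non-$SU(2)$-abelianness only after checking that $\Delta_K$ has no cyclotomic factor. That is how $8_{16},8_{17},9_{22},\dots,9_{48}$ are done, with $K(0)$ handled separately by Kronheimer--Mrowka. Some knots have $\Phi_6(t)=t^2-t+1$ dividing $\Delta_K$: namely $8_5,8_{15},8_{21},9_{16},9_{28},9_{29},9_{38}$, and also $8_{10}$ (where $\Delta=\Phi_6^3$) and $8_{20}$ (where $\Delta=\Phi_6^2$). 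For these, abundance only forces $12\mid p$ for an $SU(2)$-abelian slope $p/q$, and leaves those slopes undecided. Incidentally, the two-bridge knots are certified by Bowden's theorem, not by the abundance paper.

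Second, your handling of epimorphisms throws away exactly what these knots need. The Kitano--Suzuki epimorphisms available for them go onto $3_1$ (or $4_1$), and they are \emph{not} slope-preserving. For $8_{10},8_{20},9_{24}$ the longitude is sent to $1$. For $8_5,8_{15},8_{21},9_{16},9_{28}$ it is sent to $(\lambda')^{\pm 2}$. For $9_{29},9_{38}$ the paper uses no epimorphism at all. So ``restricting to the ones that preserve slopes'', together with your claim that an epimorphism onto the trefoil is useless, leaves all of these uncovered. The ideas you are missing are the following.
\begin{enumerate}
\item[(i)] If $\lambda\mapsto 1$, then $\pi_1(K(r/s))$ surjects onto $\pi_1(E(3_1))/\langle\!\langle\mu^r\rangle\!\rangle$ (or its $4_1$ analogue). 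This quotient has an irreducible $SU(2)$-representation for $|r|\ge 3$, because some $2\pi k/r$ lies in the $\theta$-range of $H_{3_1}$ (respectively $H_{4_1}$). The remaining slopes with $|r|\le 4$ are handled by the small-surgery theorems of Kronheimer--Mrowka and Baldwin--Sivek.
\item[(ii)] If $\lambda\mapsto(\lambda')^{\pm 2}$, abundance forces $p=12m$, so $\pi_1(K(12m/q))$ surjects onto $\pi_1(3_1(\pm 6m/q))$. By the Sivek--Zentner classification this is non-$SU(2)$-abelian except for the single slope $p/q=12$. That slope is then killed by an explicit representation of $\pi_1(K(12))$ into the binary tetrahedral group $2T$.
\item[(iii)] For $9_{29}$ and $9_{38}$, an explicit $2T$-representation with $\rho(\lambda)=1$ and $\rho(\mu)^6=1$ descends to every $K(p/q)$ with $6\mid p$, which covers the slopes left open by abundance.
\end{enumerate}
Without some substitute for (i)--(iii), your sweep ends with about ten uncovered knots in addition to $9_{47}$ and $9_{49}$.
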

Table \ref{tab:su2-master}, at the end of this work,
summarizes the $SU(2)$-abelian status of every knot with
at most $9$ crossings, together with the argument used
to establish it.

\subsection*{Acknowledgment}
I thank my former advisors, Steven Boyer and Duncan McCoy,
for teaching me the craft of mathematical research;
and my partner, Ju Marlow, for his unwavering support.

\section{Notation}\label{sec: notation}
In this section, we fix the notation and conventions that will be used throughout this work. 
We begin by recalling the standard notation for satellite knots and their associated spaces. 
Let $P(J)$ denote the satellite knot with pattern $P$ and companion knot $J$. 
The winding number of $P$ is defined as the algebraic intersection number between $P$ and an essential disc in $V = S^1 \times \mathbb{D}^2$. 
The exterior of the satellite $P(J)$ is obtained by gluing $V_P$ to $\compl{J}$, where $V_P$ denotes the exterior of $P$ considered as a knot in $V$. 
We refer to $V_P$ as the \emph{pattern space} associated with $P$. 
Details of these constructions can be found in \cite[Section 2]{GordonSatelliteKnot}.

Let $M$ be a manifold with torus boundary. A (rational) slope on the boundary $M$ is an element $[\alpha]$ of the projective space of $H_1(\partial M;\mathbb{Q})$, where $\alpha \in H_1(\partial M;\mathbb{Q}) \setminus \{ 0 \} $. Slopes can be identified as either
 \begin{itemize}
     \item a $\pm$-pair of primitive elements of $H_1(\partial M; \mathbb{Z})\equiv \fund{\partial M}$;
    \item a $\partial M$-isotopy class of essential simple closed curves on $\partial M$. 
 \end{itemize}
For further details, see \cite[Subsection 4.2]{boyer2022orderdetection}.

Let $\Sigma$ be a torus, and $\{\mu,\lambda\}$ a basis of $H_1(\Sigma;\mathbb{Z})$. We use the convention, depending on the choice of the base $\{\mu,\lambda\}$, such that an element $\nicefrac{p}{q} \in \mathbb{Q}\cup \{\nicefrac{1}{0}\}$ corresponds to the slope $p \mu + q \lambda \in H_1(\Sigma;\mathbb{Z})$. The \emph{distance} between two slopes $\nicefrac{p}{q}$ and $\nicefrac{r}{s}$ is $\Delta(\nicefrac{p}{q},\nicefrac{r}{s})=|ps-rq|$ and it corresponds to the absolute value of the algebraic intersection number between curves representing $\nicefrac{p}{q}$ and $\nicefrac{r}{s}$.

If $Y$ is a compact $3$-manifold with toroidal boundary components $\Sigma_1, \cdots, \Sigma_n$ with fixed bases $\{\mu_i,\lambda_i\}$ for each $H_1(\Sigma_i;\mathbb{Z})$, then
\[
    Y\left(\Sigma_1, \cdots, \Sigma_n;\nicefrac{r_1}{s_n},\cdots \nicefrac{r_n}{s_n}\right)
\]
denotes the closed $3$-manifold obtained by performing Dehn fillings along a simple closed curve representing $\nicefrac{r_i}{s_i}$ on $\Sigma_i$ for each $i=1,\cdots, n$. If $Y$ has a torus boundary component, we define the \emph{rational longitude} of $Y$ as the unique slope $\lambda_{Y} \subset Y$ such that $[\lambda_{Y}]$ is a torsion element of $H_1(Y; \mathbb{Z})$.

\begin{notation}
When the manifold $Y$ is the knot exterior $\compl{K}$,
we define the canonical basis $\{\mu_K,\lambda_K\} \subset \fund{\partial \compl{K}}$, where 
$\mu_K$ is given by the homotopy class of a curve that bounds an essential disc in $\nu(K) \subset S^3$ and $\lambda_K$ is the homotopy class of a curve that bounds a surface in $\compl{K}$, with orientations following the usual convention (i.e. meridional curve pushed into $\compl{K}$ and a longitudinal curve have linking number $+1$). We call these slopes knot \emph{meridian} and \emph{longitude} respectively. Sometimes the knot longitude is called \emph{null-homologous longitude}.
\label{notation: basis used}
\end{notation}
The rational longitude $\lambda_{E(K)}$ of $E(K)$ coincides as a slope with the longitude $\lambda_K$ we have just defined.
We express the slopes of $\compl{K}$ with respect this basis:
the fraction $\nicefrac{r}{s} \in \mathbb{Q}\cup \{\nicefrac{1}{0}\}$ corresponds to the slope $r \mu_K + s \lambda_K \in \fund{\partial \compl{K}}$.
With these coordinates, the slope $\nicefrac{1}{0}$ corresponds to the meridian $\mu_K$  and $\nicefrac{0}{1}$ to the longitude $\lambda_K$. We denote by $K(\nicefrac{r}{s})$ the Dehn filling on $\partial (\compl{K})$ along the curve that represents the slope $\nicefrac{r}{s}$.

\begin{rmk}\label{rmk mirror image}
Let $K$ be a knot and $\overline{K}$ its mirror image.
It is known that
    \[
        K(\nicefrac{r}{s})= r \left( \overline{K}(\shortminus \nicefrac{r}{s})\right),
    \]
    where $r(M)=-M$ denotes the manifold $M$ with opposite orientation. Thus,
    \[
        \fund{K(\nicefrac{r}{s})} = \fund{r \left(\overline{K}(\shortminus \nicefrac{r}{s})\right)}=\fund{\overline{K}(\shortminus \nicefrac{r}{s})}.
    \]
\end{rmk}

\section{General results for Satellite knots}
In this section, we establish some general results concerning satellite knots that will be used throughout this work. 
We introduce a partial ordering on knots via epimorphisms of their knot groups and study how $SU(2)$-abelian surgeries behave under this ordering.

 \begin{defn}\label{defn: K2 bigger than K1}
    Let $K_1,K_2$ be two knots in $S^3$. Let $\mu_{K_i},\lambda_{K_i} \in \fund{\partial \compl{K_i}}$ be, respectively, the classes of the meridian and the null-homologous longitude of $K_i$. We say that $K_2 \ge K_1$ if there exists an epimorphism
    \[
        \psi \colon \fund{\compl{K_2}} \twoheadrightarrow \fund{\compl{K_1}}
    \]
    such that the following are satisfied
    \begin{itemize}
        \item $\psi\left( \fund{\partial \compl{K_2}} \right) = \fund{\partial \compl{K_1}}$,
        \item Let $p$ and $q$ be two coprime integers. If $\psi\left( p \mu_{K_2}+ q \lambda_{K_2} \right)= \mu_{K_1}$, then $|p| =1 $ and $q=0$.
    \end{itemize}
\end{defn}

\begin{lemma}
    Let $K_1,K_2$ be two knots in $S^3$ such that $K_2 \ge K_1$. Let $\psi \colon \fund{\compl{K_2}} \twoheadrightarrow \fund{\compl{K_1}}$ be the epimorphism of Definition \ref{defn: K2 bigger than K1}.
    Let $\nicefrac{r}{s}\in \mathbb{Q}$ and $r',s' \in \mathbb{Z}$ be such that
        \[
            \psi\left( r \mu_{K_2} + s \lambda_{K_2}\right) = r' \mu_{K_1} + s' \lambda_{K_1} \in \fund{ \partial \compl{K_1}}.
        \]
        If the manifold $K_1(\nicefrac{r'}{s'})$ is not $SU(2)$-abelian, then the manifold $K_2(\nicefrac{r}{s})$ is not $SU(2)$-abelian.
        \label{lemma: K2 > K1, se K1 ha una chiurgia irr allora K2 la ha}
    \begin{proof}
        It is straightforward to see that since $\gcd(r,s)=1$, then $\gcd(r',s')=1$. Therefore the manifold $K_1(\nicefrac{r'}{s'})$ is well-defined.
        The map $\psi$ induces a surjective map
        \[
            \Psi \colon \fund{K_2(\nicefrac{r}{s})} \twoheadrightarrow \fund{K_1(\nicefrac{r'}{s'})}.
        \]
        By hypothesis, there exists an irreducible representation $\rho \colon \fund{K_1(\nicefrac{r'}{s'})}\to SU(2)$. The manifold $K_2(\nicefrac{r}{s})$ admits an irreducible $SU(2)$-representation by the following commutative diagram:
        \begin{equation*}
            \begin{tikzcd}
                \fund{\compl{K_2}}
                \arrow[d,twoheadrightarrow]
                \arrow[r,twoheadrightarrow,"\psi"]&  \fund{\compl{K_1}} 
                \arrow[d,twoheadrightarrow] & \\
                \fund{K_2(\nicefrac{r}{s})} \arrow[r,twoheadrightarrow," \Psi"] & \fund{K_1(\nicefrac{r'}{s'})} \arrow[r,"\rho"] & SU(2).
            \end{tikzcd}
        \end{equation*}
    \end{proof}
\end{lemma}

\begin{cor}\label{cor: K2 > K1, if K1 is not SU(2)abelian, then K2 is not}
    Let $K_1,K_2$ be two knots in $S^3$ such that $K_2 \ge K_1$. If the knot $K_1$ is not $SU(2)$-abelian, then the knot $K_2$ is not $SU(2)$-abelian as well.
    \begin{proof}
        Since $K_1$ is not $SU(2)$-abelian, $K_1(\nicefrac{r'}{s'})$ is $SU(2)$-abelian if and only if $\nicefrac{r'}{s'}=\nicefrac{1}{0}$.
        Let $\nicefrac{r}{s} \in \mathbb{Q}$ and $\psi \colon \fund{\compl{K_2}}\to \fund{\compl{K_1}}$ as in Definition \ref{defn: K2 bigger than K1}. Thus, the slope in $\partial \compl{K_2}$ represented by $\nicefrac{r}{s}$ is not mapped by $\psi$ into the knot meridian of $K_1$. With an abuse of notation we can say that $\psi(\nicefrac{r}{s})\neq \nicefrac{1}{0}$. Lemma \ref{lemma: K2 > K1, se K1 ha una chiurgia irr allora K2 la ha} implies the conclusion.
    \end{proof}
\end{cor}

Let $P$ be a satellite with winding number $w$ and let $K=P(J)$ be a satellite knot. The pattern space associated to $P$ has two boundary components:
\[
    \partial V_P = \partial \compl{J} \sqcup \partial \compl{K}.
\]
Let $\mathcal{U} \subset S^3$ be the unknot. The complement $\compl{P(\mathcal{U})}$ is the manifold
\[
    \compl{P(\mathcal{U})}= V_P \left(\partial \compl{J}; \lambda_{J}\right).
\]
Furthermore, let $\nicefrac{r}{s}\in \mathbb{Q}$, then the $\nicefrac{r}{s}$-filling of $\compl{P(\mathcal{U})}$ is
\[
     P(\mathcal{U})(\nicefrac{r}{s})=V_P \left(\partial \compl{J}, \partial \compl{K}; \lambda_{J}, \nicefrac{r}{s}\right).
\]
\begin{prop}\label{prop: satellite P(J) ammette una chirugia SU(2)-abelian then}
    Let $P$ be a satellite with winding number $w$ and let $K=P(J)$ be a satellite knot. Let $\nicefrac{r}{s} \in \mathbb{Q}$ be such that $K(\nicefrac{r}{s})$ is $SU(2)$-abelian. Then both of the following hold:
    \begin{itemize}
        \item The manifold $P(\mathcal{U})(\nicefrac{r}{s})$ is $SU(2)$-abelian.
        \item If $w \neq 0$, then the manifold $J(\nicefrac{r}{s w^2})$ is $SU(2)$-abelian.
    \end{itemize}
    \begin{proof}
        Let us suppose that $w \neq 0$. Let $\Sigma$ be the torus $\partial \compl{J}\subset \partial V_P$ in $\compl{K}$. The manifold $K(\nicefrac{r}{s})$ equals
        \[
            K(\nicefrac{r}{s})= \compl{J} \cup_{\Sigma} V_{P}\left(\partial \compl{K};\nicefrac{r}{s}\right).
        \]
        It is known that the rational longitude of $V_{P}\left(\partial \compl{K};\nicefrac{r}{s}\right)$ is 
        \[
        \lambda_C= \frac{1}{\gcd(r,sw^2)} \left(r \mu_J + sw^2 \lambda_J\right)= \frac{r}{sw^2}.
        \]
        Hence, \cite[Corollary 3.11]{Mino} implies that $\compl{J}(\lambda_C)=J(\lambda_C)=J(\nicefrac{r}{sw^2})$ is $SU(2)$-abelian.
        
        Let $\mathcal{U}$ denote the unknot in $S^3$. By \cite[Proposition 3.4]{KnotGroupEpimo}, there exists an epimorphism
        \[
        \psi: \fund{\compl{P(K)}} \twoheadrightarrow \fund{\compl{P(\mathcal{U})}}
        \]
        such that $\psi(\mu_{P(K)})=\mu_{P(K)}$ and $\psi(\lambda_{P(K)})=\lambda_{P(\mathcal{U})}$. Hence, $P(K) \ge P(\mathcal{U})$. The conclusion is given by Lemma \ref{lemma: K2 > K1, se K1 ha una chiurgia irr allora K2 la ha}.
    \end{proof}
\end{prop}

For the sake of completeness, we mention that the second part of Proposition \ref{prop: satellite P(J) ammette una chirugia SU(2)-abelian then} is proven in \cite{SU2AverseKnots} in a different way.

Let $T \subset S^3$ be a non-trivial torus knot. 
We say that the cable knot $C_{p_1,q_1}(T)$ is a $1$-iterated torus knot. Similarly, for $n \ge 2$, we call $n$-iterated torus knot the cable knot $C_{p_n,q_n}(J)$ where $J$ is an $(n-1)$ iterated torus knot.
In general, we say that $K$ is an iterated torus knot if $K$ is either a torus knot or an $n$-iterated torus knot, with $n \ge 1$. 
From the $SU(2)$ perspective, the $SU(2)$-abelian surgeries of iterated torus knots are classified in the following:
\begin{teo}[{\cite{Sivek_Zentner}}]\label{thm:iterated-cables}
Let $K$ be an iterated torus knot.  If some nontrivial $r$-surgery on $K$ is $SU(2)$-abelian, then $K$, $r$, and $K(r)$ are among the following:
\begin{align*}
K &= T_{p,q}: & r&=pq+\tfrac{1}{m}\ (m \neq 0), & K(r) &= L(mpq+1,mq^2) \\
K &= T_{p,2\epsilon}: & r&=2\epsilon p, & K(r) &= L(p,2\epsilon) \# \mathbb{RP}^3 \\
K &= C_{2pq+\epsilon,2}(T_{p,q}): & r &= 4pq+\epsilon, & K(r) &= L(4pq+\epsilon,4q^2) \\
&& \mathrm{or\ } r &= 4pq+2\epsilon, & K(r) &= L(2pq+\epsilon,2q^2) \# \mathbb{RP}^3.
\end{align*}
Here $\epsilon$ can be either $+1$ or $-1$.  In particular, $n$-iterated torus knots with $n \geq 3$ are not $SU(2)$-abelian.
\end{teo}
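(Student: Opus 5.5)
The plan is to argue by induction on the number of cabling operations. The base case, torus knots, is handled by an explicit description of their $SU(2)$-representations. The inductive step, cables, is handled by Proposition \ref{prop: satellite P(J) ammette una chirugia SU(2)-abelian then} together with a Diophantine analysis. At the end one checks that the listed surgeries really are the stated lens spaces or connected sums, hence $SU(2)$-abelian.

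\emph{Base case $K=T_{p,q}$.} Write $r=\nicefrac{a}{b}$ and $\Delta=|pqb-a|$. By Moser's classification:
\begin{itemize}
\item if $\Delta\ge 2$, $K(r)$ is Seifert fibred over $S^2$ with exceptional fibres of orders $p,q,\Delta$;
\item if $\Delta=1$, $K(r)$ is the lens space $L(mpq+1,mq^2)$ with $r=pq+\nicefrac1m$;
\item if $\Delta=0$, $K(r)=L(p,q)\#L(q,p)$.
\end{itemize}
In the first case I would produce an irreducible representation from the triangle group $\Delta(p,q,\Delta)$. Sending the three generators to rotations of angles $\pi k_1/p$, $\pi k_2/q$, $\pi k_3/\Delta$ whose axes form a spherical triangle gives a non-abelian $SU(2)$-representation; this needs the triangle inequalities on angles, which can be met by choosing the $k_i$ suitably. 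Equivalently, one can use Klassen's arcs in the pillowcase from \cite[Section I.B]{EpKlassenSu2} and check that the line of slope $r$ meets them. In the reducible case, $L(p,q)\#L(q,p)$ has $\pi_1=\mathbb{Z}/p*\mathbb{Z}/q$. This group has an irreducible $SU(2)$-representation unless one factor has order at most $2$. That leaves exactly $q=2\epsilon$ and $r=2\epsilon p$, giving $L(p,2\epsilon)\#\mathbb{RP}^3$.

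\emph{Inductive step $K=C_{m,n}(J)$, with winding number $n\ge2$.} Proposition \ref{prop: satellite P(J) ammette una chirugia SU(2)-abelian then} gives two necessary conditions:
\begin{itemize}
\item $P(\mathcal U)(r)=T_{m,n}(r)$ must be $SU(2)$-abelian, so by the base case $r\in\{mn+\nicefrac1l,\ 2\epsilon m\ (\text{when } n=2)\}$;
\item $J(\nicefrac{r}{n^2})$ must be $SU(2)$-abelian, so $\nicefrac{r}{n^2}$ must be an exceptional slope for $J$.
\end{itemize}
When $J=T_{p,q}$, the second condition means $\nicefrac{r}{n^2}\in\{pq+\nicefrac1k,\ 2\epsilon p\}$. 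Now compare denominators in $mn+\nicefrac1l=n^2pq+\nicefrac{n^2}{k}$. Using $\gcd(m,n)=1$, this should force $|l|=1$, $n=2$, and $m=2pq\pm1$. That yields $r=4pq+\epsilon$, and the reducible option yields $r=4pq+2\epsilon$; all other combinations give a contradiction.

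\emph{Sufficiency and higher iterates.} For the surviving slopes, Gordon's cabling formula gives $K(mn\pm1)=J(\nicefrac{mn\pm1}{n^2})$ and $K(mn)=J(\nicefrac mn)\#L(n,m)$. This identifies the listed manifolds and shows they are $SU(2)$-abelian. For $n$-iterated knots with $n\ge3$, apply the same reasoning one level up. The companion is a $1$-iterated knot, and its only abelian slopes are the integers $4pq+\epsilon$ and $4pq+2\epsilon$. So $\nicefrac{r}{n'^2}$ would have to be one of these integers, while $r=m'n'\pm\nicefrac1l$ or $r=2\epsilon m'$ with $m'$ odd. Divisibility by $n'^2\ge4$ fails in each case.

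\emph{Main obstacle.} I expect the hardest step to be the torus-knot Seifert fibred case: showing that every triangle group $\Delta(p,q,\Delta)$ with all orders at least $2$, arising here, admits an irreducible $SU(2)$-representation that is compatible with the surgery relation. I would first try the explicit pillowcase computation. The Diophantine bookkeeping in the cable step is routine by comparison, though it has several sign cases.
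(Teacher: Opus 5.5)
The paper does not prove this theorem; it quotes it from \cite{Sivek_Zentner}, so your argument has to stand on its own. The skeleton is reasonable, and your arithmetic is correct whenever the pattern torus knot $T_{m,n}$ is nontrivial. In that case the two conditions of Proposition~\ref{prop: satellite P(J) ammette una chirugia SU(2)-abelian then} do force $n=2$, $m=2pq+\epsilon$ and $r\in\{4pq+\epsilon,\,4pq+2\epsilon\}$.

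\textbf{The gap is the case $|m|=1$.} The statement does not exclude it; for instance the $(2,1)$-cable of the trefoil is $C_{1,2}(T_{2,3})$. There $T_{m,n}$ is the unknot, so your first condition is vacuous, and the second one does not pin $r$ down. At $r=25$, both $T_{1,2}(25)=L(25,1)$ and $T_{2,3}(25/4)=T_{2,3}(6+\tfrac14)$ are lens spaces, yet the theorem asserts that $C_{1,2}(T_{2,3})(25)$ is not $SU(2)$-abelian. The same hole reappears in your higher-iterate step. The knot $C_{\pm1,n'}\bigl(C_{2pq+\epsilon,2}(T_{p,q})\bigr)$ at $r=n'^2(4pq+\epsilon)$ passes both of your tests. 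Your divisibility argument silently uses $r\in\{m'n'+1/l,\ m'n'\}$, and that needs $|m'|\ge 2$.

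\textbf{What is missing} is an independent proof that an $SU(2)$-abelian slope $r=a/b$ on a cable $C_{m,n}(J)$ satisfies $|a-mnb|\le 1$. When $|a-mnb|\ge 2$, $K(r)=E(J)\cup V_P(r)$ is a toroidal graph manifold, with $V_P(r)$ Seifert fibred over $D^2(n,|a-mnb|)$. You must produce an irreducible representation of it directly.

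For $|m|=1$ the first bullet of the proposition is vacuous, and the second only yields an irreducible representation when $J(r/n^2)$ is non-$SU(2)$-abelian, which fails at the slopes above. Moreover, a representation that is abelian on $E(J)$ kills $\lambda_J$. Its restriction to $V_P(r)$ then factors through the lens space $T_{m,n}(r)$, so the whole representation is abelian. Hence the representation you need must restrict on $\partial E(J)$ to a point of $H_J$: Klassen's arcs for $T_{p,q}$, or $H_J$ of the cable one level up. That point must also lie in the image of the representation variety of $V_P(r)$, whose irreducible part sits on the lines $m\theta+n\psi\equiv 0,\pi$. Establishing such an intersection is where the real work lies, more than in the Seifert-fibred base case you flag.

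Once $|a-mnb|\le 1$ is known, you get $r\in\{mn+1/l,\ mn\}$ regardless of $|m|$, and your computation goes through unchanged. At the first level it forces $|m|=|2pq\pm1|\ge 11$.

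\textbf{Two smaller points.}
\begin{enumerate}
\item The $SU(2)$-abelian slopes of $T_{m,n}$ also include $r=mn$ when $|m|=2$. This is harmless: $r/n^2=\pm 2/n$ with $n\ge 3$ odd is not an exceptional slope of $T_{p,q}$.
\item In the base case you must represent $\pi_1(K(r))$ with the fibre sent to $-1$, not the triangle group itself. The triangle group has no irreducible $SU(2)$-representation when one cone order is $2$, as for every $T_{2,q}$, since the only element of order $2$ in $SU(2)$ is $-1$.
\end{enumerate}
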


\section{Review of the Pillowcase}
\label{sec: pillowcase}
We refer the reader to \cite{Mino} and \cite{ThePillowcaseHeddenHeraldKirk} for background material.
Let $G$ be a group. We denote by $\mathcal{R}(G)$ the space $\mbox{Hom}(G,SU(2))$. The group $SU(2)$ acts on $\mathcal{R}(G)$ by conjugation. We define
\[
    \chi(G) \coloneqq \left. \mbox{Hom}\left( G, SU(2)\right) \right/_{\mbox{conjugation}} = \left. \mathcal{R}(G)\right/_{\mbox{conjugation}},
\]
and 
\[
q \colon \mathcal{R}(G) \to \chi(G)
\]
be the quotient map.
Let $M$ be a manifold. If $G = \fund{M}$, then we simplify the notation by writing $\mathcal{R}(M)$ and $\chi(M)$ instead of $\mathcal{R}(\fund{M})$ and $\chi(\fund{M})$. Let $K$ be a knot in $S^3$. If $M=\compl{K}$, then we denote by $\mathcal{R}(K)$ and $\chi(K)$ the spaces $\mathcal{R}(\fund{\compl{K}})$ and $\chi(\fund{\compl{K}})$.

Let $\Sigma$ be a torus. We define $\R{\Sigma}$ as $\text{Hom}\left( \fund{\Sigma},\Ui \right)$,
where $\Ui$ is the subgroup of diagonal matrices of $SU(2)$.
There exists a natural inclusion $\R{\Sigma} \subset \mathcal{R}(\Sigma)$.
The space $\R{\Sigma}$ is homeomorphic to the torus $S^1 \times S^1 =[0,2\pi]^2/_{\sim} $. Every representation in $\mathcal{R}(\Sigma)=\mbox{Hom}(\fund{\Sigma},SU(2))$ is conjugate to one representation in $\R{\Sigma}=\mbox{Hom}(\fund{\Sigma},U(1))$. This implies that 
\[
    \restr{q}{\R{\Sigma}} \left( \R{\Sigma} \right) = \chi (\Sigma).
\]

Let $K \subset S^3$ be a knot, and let $\mu_K$ and $\lambda_K$ be the classes of the meridian and the null-homologous longitude in $\fund{ \partial \compl{K}}$ as in Section \ref{sec: notation}.
We give coordinates to the space $\R{\partial \compl{K}}$ with respect to  the basis $\{\mu_K , \lambda_K\} \subset \fund{\partial \compl{K}}$ in the following way:
the point $(\theta, \psi) \in [0,2\pi]^2/_{\sim} = \R{\partial \compl{K}}$ corresponds to the unique representation
\begin{equation}
\fund{\partial \compl{K}}\to \Ui, \quad \text{with }\quad \mu_K \mapsto \begin{bmatrix}
    e^{i\theta} & 0 \\ 0 & e^{-i\theta}
\end{bmatrix} \quad \text{and} \quad \lambda_K \mapsto \begin{bmatrix}
    e^{i\psi} & 0 \\ 0 & e^{-i\psi}
\end{bmatrix}.
\label{eq: coordinates of R(partial E(K))}
\end{equation}

\begin{defn}\label{defn: linee p/q}
    Let $\R{\partial \compl{K}}$ be parameterized as in \eqref{eq: coordinates of R(partial E(K))}. Let $p,q \in \mathbb{Z}$ be two coprime integers. We define the lines $ L_{\nicefrac{p}{q}}^0$ and $ L_{\nicefrac{p}{q}}^\pi$ as
    \[
        L_{\nicefrac{p}{q}}^\varepsilon \coloneqq \left\{ (\theta,\psi) \in \R{\partial \compl{K}} \middle| p \theta + q\psi \equiv_{2\pi} \varepsilon \right\} \quad \text{with} \quad \varepsilon \in \{0,\pi\}.
    \]
    We will say that such lines have slope $-\nicefrac{p}{q}.$
\end{defn}

We define $T(\compl{K},\partial \compl{K}) \subset \R{\partial \compl{K}}$ as:
\[
     T(\compl{K},\partial \compl{K}) \coloneqq \left\{  \eta \in \R{\partial \compl{K}} \, \middle| \, \exists \rho \in \mathcal{R}( \compl{K}) \text{ such that } \restr{\rho}{\fund{\partial \compl{K}} }\equiv \eta \right\}.
\]

Equivalently, the space $T(\compl{K},\partial \compl{K})$ is the space of representations $\fund{\partial \compl{K}}\to \Ui$ that extend to $\fund{\compl{K}}$. 
In order to make the notation a little lighter we denote by $T(K,\partial)$ the space $T(\compl{K},\partial \compl{K})$.
We define $H_{K} \subset T(K,\partial)$ as
\[
    H_{K} \coloneqq \left\{ \eta \in \R{\partial \compl{K}} \,\middle|\, \exists \rho \in \mathcal{R}(\compl{K}) \text{ such that } \restr{\rho}{\fund{\partial \compl{K}}}\equiv \eta\text{ and $\rho$ is irreducible} \right\}.
\]
Equivalently, a representation $\fund{\partial \compl{K}}\to \Ui$ is in $H_{K}$ if and only if it extends to an irreducible representation $\fund{\compl{K}} \to SU(2)$.

Lastly, we define $A_K \subset T(K,\partial)$ as
\[
    A_{K} \coloneqq \left\{ \eta \in \R{\partial \compl{K}} \,\middle|\, \exists \rho \in \mathcal{R}(\compl{K}) \text{ such that } \restr{\rho}{\fund{\partial \compl{K}}}\equiv \eta\text{ and $\rho$ is abelian} \right\}.
\]
As a consequence of \cite[Corollary 3.9]{Mino} we obtain that
\[
    A_{K} \coloneqq \left\{ \eta \in \R{\partial \compl{K}} \,\middle|\, \eta(\lambda_K)=1 \right\},
\]
where $\lambda_K \subset \partial \compl{K}$ is the null-homologous longitude of the knot $K$.

\begin{rmk}\label{rmk: surgery and the line}
    Let $K$ be a non-trivial knot.
    Let $\R{\partial \compl{K}}$ be given with coordinates $(\theta,\psi)$ as in \eqref{eq: coordinates of R(partial E(K))}.
    As a consequence of \cite[Theorem 3.7]{Mino}, the surgery $K(\nicefrac{r}{s})$ is $SU(2)$-abelian if and only if $H_K \cap L_{\nicefrac{r}{s}}^0$ is empty. From Definition \ref{defn: linee p/q}, we obtain 
    \[
        L_{\nicefrac{r}{s}}^0 = \bigcup_{{j \in \{1,\cdots,|r|\}}} \left\{ \left((\theta, -\frac{r}{s}\theta + \frac{2\pi j}{r}\right) \middle| \theta_K \in [0,2\pi] \right\} \subset \R{\partial \compl{K}}.
    \]
    Hence, $K(\nicefrac{r}{s})$ is $SU(2)$-abelian if and only if
    \[
        \left(\theta, -\frac{r}{s}\theta + \frac{2\pi j}{q}\right) \notin H_K
    \]
    for every $\theta \in [0,2\pi]$ and $j \in \{1,\cdots, |r|\}$.
\end{rmk}

Let $\rho_1$ and $\rho_2$ be the representations in $\R{\partial \compl{K}}$ corresponding to the points $(\theta,\psi)$ and $(2\pi-\theta,2\pi - \psi)$. Let $X=\begin{bmatrix}
        0 & -1 \\ 1 & 0
    \end{bmatrix} \in SU(2)$. We notice that
\[
    X
     \begin{bmatrix}
        e^{i\alpha} & 0 \\ 0 & e^{-i\alpha}
    \end{bmatrix}
    X^{-1}
    = 
    \begin{bmatrix}
        e^{-i\alpha} & 0 \\ 0 & e^{i\alpha}
    \end{bmatrix}
    = 
    \begin{bmatrix}
        e^{i(2\pi -\alpha) } & 0 \\ 0 & e^{-i(2\pi - \alpha)}
    \end{bmatrix}.
\]
This implies that $X \rho_1 X^{-1}=\rho_2$. Hence, the representations $\rho_1$ and $\rho_2$ are in the same class in $\chi(\partial \compl{K})$. It can be proven that
\[
    \chi(\partial \compl{K}) = \frac{\R{\partial \compl{K}}}{\text{conjugation}}= \frac{[0,2\pi] \times [0,2\pi]}{(\theta,\psi) \sim (2\pi -\theta,2\pi -\psi)}.
\]
Equivalently, the space $\chi(\partial \compl{K})$ can be obtained as the quotient of the fundamental domain $[0, \pi ] \times [0, 2\pi]$ by the following identities:
\[  
(0,\psi) \sim (0,2\pi - \psi), \quad (\theta,0) \sim (\theta,2\pi), \quad \text{and} \quad (\pi,\psi) \sim (\pi,2\pi - \psi).
\]
Let the space $\R{\partial \compl{K}}$ be given with coordinates as in \eqref{eq: coordinates of R(partial E(K))}. The map $q\colon \R{\partial \compl{K}} \to \chi (\partial \compl{K})$ induces the following parametrization of the target: the point $(\theta,\psi) \in [0,\pi]\times [0,2\pi] /_\sim$ corresponds to the $SU(2)$ conjugacy class of the representation
\begin{equation}
\fund{\partial \compl{K}}\to \Ui, \quad \text{with }\quad \mu_K \mapsto \begin{bmatrix}
    e^{i\theta} & 0 \\ 0 & e^{-i\theta}
\end{bmatrix} \quad \text{and} \quad \lambda_K \mapsto \begin{bmatrix}
    e^{i\psi} & 0 \\ 0 & e^{-i\psi}
\end{bmatrix}.
\end{equation}
The space $\chi(\partial \compl{K})$ is homeomorphic to a sphere with four singular points of order $2$ and it is sometimes called the \emph{pillowcase}.
The inclusion $\iota \colon \partial \compl{K} \to \compl{K}$ induces the restriction
\[
    \iota^\ast\colon \chi(K) \to \chi(\partial \compl{K}).
\]
The image $\iota^\ast\chi(K) \subset \chi(\partial \compl{K})$ is sometimes called the \emph{image of the knot in the pillowcase}. It is easy to see that
\[
    q \left( T(K,\partial)\right) = \iota^\ast\chi(K) \subset \chi(\partial \compl{K})
\]
and
\[
    T(K,\partial) =q^{-1} \left( \iota^\ast\chi(K) \right) \subset \R{\partial \compl{K}}.
\]

\begin{cor}
    Let $K$ be a non-trivial knot. Then any topologically embedded path from the point $P = (0, \pi)$ to $Q = (\pi, \pi)$ in $\R{\partial \compl{K}}$ and missing the line 
    \[
    \left\{\psi \equiv_{2\pi} 0\right\} \subset \R{\partial \compl{K}}
    \]
    has an intersection point with $H_{K} \subset T(K,\partial)$.
    \label{cor: every walk in R(partial K) from P to Q}
    \begin{proof}
        We remind the reader that $q\colon \R{\partial \compl{K}} \to \chi(\partial \compl{K})$ is the natural quotient map.
        Let $\gamma \subset \R{\partial \compl{K}}$ be a walk from $P$ to $Q$ and missing the line $\{\psi \equiv_{2\pi} 0\}$. The image $q(\gamma)$ is an immersed walk in the pillowcase $\chi(\partial \compl{K})$ from $q(P)=(0,\pi)$ to $q(Q)=(\pi,\pi)$ missing the line $\left\{\psi \equiv_{2\pi } 0 \right\} \subset \chi(\partial \compl{K})$.
        Let $\gamma' \subset \chi(\partial K)$ be a sub-walk of $q(\gamma)$ that is embedded and contains the points $q(P)$ and $q(Q)$.
        Let $(\theta,\psi)$ be a point in $\iota^\ast \chi (K)\cap \gamma' \subset \chi(K)$. This point exists by \cite[Theorem 7.2]{IntegerHomSL2CIrrResp}. In particular $\psi \nequiv_{2\pi} 0$. We obtain that the pre-image $q^{-1}(\theta,\psi) \subset \R{\partial K}$ contains a point in
        \[
            \gamma \cap q^{-1} \left( \iota^\ast\chi(K) \right) = \gamma \cap T(K,\partial).
        \]
        Since $\psi \nequiv_{2\pi} 0$, we obtain that this point is in $\gamma \cap H_{K}$ by \cite[Lemma 2.8]{SU2cyclicSurgeryFormula}.
    \end{proof}
\end{cor}

\begin{defn}
    Let $K\subset S^3$ be a non-trivial knot. Let $\gamma:(-1,1) \to H_K$ be an open arc with
    \[
        \lim_{x \mapsto \pm 1} \gamma(x) \in A_K.
    \]
    Let $\Gamma$ be the support of $\gamma$.
    We say that $\Gamma$ is a \emph{good arc} if the map
    \[
        \fund{\Gamma \cup A_K} \longrightarrow \fund{\R{\partial \compl{K}}}
    \]
    induced by the inclusion $\Gamma \cup A_K \subset \R{\partial \compl{K}}$ is surjective.
\end{defn}
Roughly speaking, we say that $\Gamma$ is a good arc if it cannot be pushed into the set $A_K \subset \R{\partial \compl{K}}$ by a homotopy of the torus $\R{\partial \compl{K}}$ that does not move $A_K$.

The magenta arcs in Figure \ref{figure: trefoild mu5} and Figure \ref{figure: figure eight knot} are examples of good arcs for the trefoil and the figure eight knot.
Corollary \ref{cor: every walk in R(partial K) from P to Q} implies that every knot in $S^3$ admits a good arc.

\section{Composite knots}

In this section, we develop Theorem \ref{teo: double knot}, one of the main result of this work. 
This suggests that every non-trivial surgery on a composite knot 
produces a non-$SU(2)$-abelian $3$-manifold.

It is well known that an integer surgery along a composite knot
yields a toroidal $3$-manifold that is obtained by gluing together the two knot exteriors along their boundary tori.
The following lemma makes the gluing diffeomorphism explicit.

\begin{lemma}\label{lemma: int surgery on composite knot}
Let $K_1$ and $K_2$ be two nontrivial knots of $S^3$ and $K_\# $ the composite knot $ K_1 \# K_2$.
For $i \in\{1,2\}$ we denote by $\{\mu_i,\lambda_i\}$ the system of meridian and null-homologous longitude for $K_i$.
Let $r \in \mathbb{Z}$. The surgery $K_\#(r)$ is the toroidal manifold
        \[
            \compl{K_1} \cup_{\varphi} \compl{K_2},
        \]
        where $\varphi \colon \partial \compl{K_1} \to \partial \compl{K_2}$ is an orientation reversing diffeomorphism such that
        \[
        \varphi(\mu_1) = \mu_2 \quad \text{and} \quad \varphi(\lambda_1)= \mu_2^{-r}\lambda_2^{-1}.
        \]
\begin{proof}
    For $i \in\{1,2,\#\}$ we denote by $\{\mu_i,\lambda_i\}$ the system of meridian and null-homologous longitude for $K_i$.
    According to \eqref{eq: presentazione composite knot},
    \[
        \fund{K_\#(r)}= \frac{\fund{\compl{K_\#}}}{\normalsubgroup{\mu_\#^r\lambda_\#}} = \frac{\fund{\compl{K_1}\ast \fund{\compl{K_2}}}}{\normalsubgroup{\mu_\#^r\lambda_\#, \mu_\#=\mu_1=\mu_2, \lambda_\#=\lambda_1\lambda_2}}.
    \]
    Therefore in $\fund{K_\#(r)}$ we have that $\mu_1=\mu_2$ and $\mu_2^r\lambda_2=\lambda_1^{-1}$. This implies that the groups $\fund{K_\#(r)}$ and $\fund{\compl{K_1}\cup_{\varphi}\compl{K_2}}$ are isomorphic and therefore we obtain the conclusion.
\end{proof}
\end{lemma}

Lemma \ref{lemma: int surgery on composite knot} is proven with a different technique
in \cite[Lemma 1.3]{sorya2026numbershareddehnsurgeries} by my doctoral colleague and friend Patricia Sorya.
Now we need to understand how the homomorphism $\varphi \colon \partial \compl{K_1} \to \partial \compl{K_2}$
acts on the representation spaces $\R{\partial\compl{K_1}}$ and $\R{\partial\compl{K_2}}$.

\begin{prop}
    Let $K_1,K_2$ be two non-trivial knots in $S^3$ and let $K_\# = K_1 \# K_2$ be their composite knot. Let $r \in \mathbb{Z}$.
    Let $\varphi \colon \partial \compl{K_1} \to \partial \compl{K_2}$ be the orientation reversing diffeomorphism defined in Lemma \ref{lemma: int surgery on composite knot}.
    For $i \in \{1,2\}$, we parameterize the space $\R{\partial \compl{K_i}}$ with coordinates $(\theta_i,\psi_i)$ as in \eqref{eq: coordinates of R(partial E(K))}.
    Then the induced map
    \[
        \varphi^\ast \colon \R{\partial \compl{K_2}} \to \R{\partial \compl{K_1}}, \quad \eta \mapsto \eta \circ \varphi_\ast
    \]
    is given by the formula
    \[
        \varphi^\ast(\theta_2,\psi_2) = (\theta_2, -r\theta_2 - \psi_2) \in \R{\partial \compl{K_1}}.
    \]
    \begin{proof}
        Let $\eta_2 \in \R{\partial \compl{K_2}}$ be the representation corresponding
        to the point $(\theta_2,\psi_2) \in [0,2\pi]^2/_{\sim}$.
        Let $\eta_1 \in \R{\compl{K_1}}$ be the image $\varphi^\ast(\eta_2) = \eta_1$,
        and $(\theta_1,\psi_1) \in [0,2\pi]^2/_{\sim}$ its corresponding point. We prove the formula
        for $\varphi^\ast$ by computing the point $(\theta_1,\psi_1)$ in terms of $(\theta_2,\psi_2)$.

        Since $\varphi(\mu_1) = \mu_2$ and $\varphi(\lambda_1)= \mu_2^{-r}\lambda_2^{-1}$, we have
        \[
            \eta_1(\mu_1) = \eta_2(\varphi(\mu_1)) = \eta_2(\mu_2) = e^{i\theta_2}
        \]
        and
        \[
            \eta_1(\lambda_1) = \eta_2(\varphi(\lambda_1)) = \eta_2(\mu_2^{-r}\lambda_2^{-1}) = (\eta_2(\mu_2))^{-r} (\eta_2(\lambda_2))^{-1} = e^{i(-r\theta_2 - \psi_2)}.
        \]
        Therefore, the homomorphism $\eta_1=\varphi^\ast(\eta_2)$ corresponds to the point $(\theta_2,-r\theta_2 - \psi_2)$. This implies that $\varphi^\ast(\theta_2,\psi_2) = (\theta_2, -r\theta_2 - \psi_2)$.
        \end{proof}
    \label{prop: action on integer surgery on T(K)}    
\end{prop}

\begin{defn}
Let $K\subset S^3$ be a knot and let $\Delta_K(t)$ be its Alexander polynomial. We say that $K$ is \emph{SU(2)-clean} if for every $r\in\mathbb{Z}$ such that the surgery $K(r)$ is $SU(2)$-abelian, we have $
\Delta_K(\zeta_r^k) \neq 0 $ for every $k\in \{1,\dots,r\}$
where $\zeta_r = e^{\frac{2\pi i}{r}}$.
\label{defn SU(2)-clean}
\end{defn}

Definition \ref{defn SU(2)-clean} depends on the choice of basis $\{\mu,\lambda\}$ we made in Section \ref{sec: notation};
however, since we have fixed a choice of meridian and longitude,
this definition is not ambiguous. Making it independent of this
choice is beyond the scope of this work.

\begin{cor}\label{cor: some examples}
    The following knots are $SU(2)$-clean:
    \begin{itemize}
        \item The unknot $\mathcal{U}$,
        \item torus knots $T(p,q)$,
        \item $2$-bridge non-torus knots,
        \item the pretzel knots $P(-2,3,7)$ and $P(-2,3,13)$.
    \end{itemize}
    \begin{proof}
Since the Alexander polynomial of the unknot is $\Delta_{\mathcal{U}}(t)=1$, the unknot is a $SU(2)$-clean knot.

By Theorem \ref{thm:iterated-cables}, the only integral $SU(2)$-abelian surgeries on a non-trivial torus knot $T(p,q)$ are the slopes
$r = pq \pm 1$.
It is known that the Alexander polynomial of such a torus knot $T(p,q)$ is
\[
\Delta_{T(p,q)}(t) = \frac{(t^{pq}-1)(t-1)}{(t^p-1)(t^q-1)}.
\]
This implies that the roots of $\Delta_{T(p,q)}(t)$ are $pq$-th roots of unity.
Therefore, since $\gcd(pq,pq\pm1)=1$, we obtain that $\Delta_{T(p,q)}(\zeta_r^k) \neq 0$ for every $k=1,\dots,pq$.
Hence, every torus knot is $SU(2)$-clean.

By \cite[Theorem 1.1]{bowden2007winding}, if $K$ is $2$-bridge knot that is not a torus knot,
then there are no integer $SU(2)$-abelian surgeries on $K$. Hence, $K$ is $SU(2)$-clean.

The Alexander polynomial of the pretzel knot $P(-2,3,7)$ is
\[
\Delta_{P(-2,3,7)}(t) = t^{10}-t^9+t^7-t^6+t^5-t^4+t^3-t+1.
\]
A direct computation shows that $\Delta_{P(-2,3,7)}(t)$ is irreducible over $\mathbb{Q}$
and does not coincide with any cyclotomic polynomial $\Phi_k(t)$. 
Since the minimal polynomial of any root of unity over $\mathbb{Q}$ is a cyclotomic polynomial, it follows that $\Delta_{P(-2,3,7)}(t)$
has no root of unity among its roots.
Consequently $\Delta_{P(-2,3,7)}(\zeta_r^k)\neq0$ holds automatically for every $r \in \mathbb{Z}$ and every $k \in \{1, \cdots, r\}$,
regardless of which integer surgeries on $P(-2,3,7)$ are $SU(2)$-abelian.
This implies that the pretzel knot $P(-2,3,7)$ is $SU(2)$-clean.

The Alexander polynomial of the pretzel knot $P(-2,3,13)$ is
\[
\Delta_{P(-2,3,13)}(t) = t^{16}-t^{15}+t^{13}-t^{12}+t^{11}-t^{10}+t^9-t^8+t^7-t^6+t^5-t^4+t^3-t+1.
\]
A similar argument shows that $\Delta_{P(-2,3,13)}(t)$ has no root of unity among its roots, and hence $P(-2,3,13)$ is also $SU(2)$-clean.
\end{proof}
\end{cor}

The following theorem appears to be well known to the author, although no explicit reference could be found. For the reader’s convenience, we include a proof.
This result provides some justification for the assumption that the knot under consideration is $SU(2)$-clean in our study of composite knots.
As a consequence of Proposition \ref{prop: satellite P(J) ammette una chirugia SU(2)-abelian then}, if a knot $K_1$ is not $SU(2)$-abelian, then the connected sum $K_1 \# K_2$ is not $SU(2)$-abelian for any knot $K_2$.
Therefore, one is naturally led to consider pairs of knots for which each factor admits a non-trivial $SU(2)$-abelian surgery.
This is precisely the setting in which the $SU(2)$-clean assumption becomes relevant.
Indeed, $SU(2)$-abelian knots may be regarded as the $SU(2)$ analogue of knots admitting a cyclic surgery.
In particular, any knot admitting a non-trivial cyclic surgery admits a non-trivial
$SU(2)$-abelian surgery. It is not known whether the converse holds,
or whether there exists a knot admitting a non-trivial $SU(2)$-abelian surgery
and only the trivial cyclic surgery.
Since cyclic surgeries are necessarily integral by the celebrated Cyclic Surgery Theorem of Culler, Gordon, Luecke, and Shalen,
the analogy is especially close for integer surgeries.

\begin{teo}
\label{thm:berge-clean}
Let $K\subset S^3$ be a knot and let $p\in\mathbb{Z}_{>0}$ be such that $K(p)$ is a lens space. Then
\[
\Delta_K(\zeta_p^k)\neq 0\qquad\text{for every }k=1,\dots,p.
\]
\begin{proof}
We recall that $\zeta_p^k \coloneq e^{i\frac{2 \pi k}{p}}$.
Fix $k\in\{1,\dots,p-1\}$ and let $\chi_k:\mathbb{Z}/p\mathbb{Z}\to\mathbb{C}^{*}$ be the character $1 \mapsto \zeta_p^{k}$.
It is known that for every knot $K \subset S^3$ and $\epsilon \in \{\pm 1\}$, we have that $\Delta_K(\epsilon) \neq 0$. Hence, we assume that $\zeta_p^k \neq \pm 1$.
By Milnor's Dehn surgery formula for Reidemeister torsion \cite{Milnor1962}, the torsion of $K(p)$ twisted by $\chi_k$ is given, up to multiplication by a unit of $\mathbb{Z}[\zeta_p]$, by
\[
\tau\big(K(p),\chi_k\big)\ \doteq\ \frac{\Delta_K(\zeta_p^{k})}{(\zeta_p^{k}-1)(\zeta_p^{-k}-1)}.
\]
The denominator is nonzero since $\zeta_p^k\neq1$ for $1\le k\le p-1$.

Since $K(p)=L(p,q)$ is a lens space, its Reidemeister torsion with respect to any nontrivial character of $\pi_1(L(p,q))\cong\mathbb{Z}/p\mathbb{Z}$ is classically known to be well defined and nonzero \cite{Reidemeister1935}.

Combining the two computations of $\tau(K(p),\chi_k)$, we conclude that $\Delta_K(\zeta_p^{k})\neq0$. Since $k\in\{1,\dots,p-1\}$ was arbitrary, we get the conclusion.
\end{proof}
\end{teo}

The following proposition is a consequence of \cite[Theorem 19]{EpKlassenSu2} and the corresponding discussion on page $826$, and it will be used in the proof of Proposition \ref{prop: integer surgery on composite}.
\begin{prop}[{\cite[Theorem 19]{EpKlassenSu2}}]
    \label{prop: limit prop of Klassen}
    Let $K \subset S^3$ be a non-trivial knot.
    Let us parameterize the space $\R{\partial \compl{K}}$ with coordinates $(\theta,\psi)$ as in \eqref{eq: coordinates of R(partial E(K))}.
    If a sequence $\{\eta_n\}_{n \in \mathbb{N}}$ of points in $H_K \subset T(K,\partial)$ converges to a point $\eta_\infty = (\alpha,0)  \in A_K$,
    then $\Delta_K(e^{2i\alpha})=0$. Here $\Delta_K(t)$ is the Alexander polynomial of the knot $K$.
\end{prop}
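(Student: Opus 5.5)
The plan is to reduce the pillowcase statement to Klassen's representation-level result \cite[Theorem 19]{EpKlassenSu2}, together with the Burde--de Rham characterization of non-abelian reducible representations, by a compactness argument in $\mathcal{R}(K)$.

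\textbf{Step 1: lift and extract a limit.} For every $n$, choose an irreducible $\rho_n \in \mathcal{R}(K)$ with $\restr{\rho_n}{\fund{\partial \compl{K}}} = \eta_n$. This is possible by the definition of $H_K$; we may take the restriction to be exactly diagonal, not merely conjugate to $\eta_n$. Since $\mathcal{R}(K) \subset SU(2)^{g}$ (for a finite generating set of $\fund{\compl{K}}$) is compact, a subsequence converges to some $\rho_\infty \in \mathcal{R}(K)$. Its boundary restriction is the diagonal representation $\eta_\infty = (\alpha,0)$, so $\rho_\infty(\mu_K)=\mathrm{diag}(e^{i\alpha},e^{-i\alpha})$ and $\rho_\infty(\lambda_K)=1$.

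\textbf{Step 2: split by the type of $\rho_\infty$.}
\begin{itemize}
\item If $\rho_\infty$ is abelian, it is the representation $\rho_\alpha$ factoring through $H_1(\compl{K})$. It is a limit of irreducible representations, and Klassen's Theorem 19 (the discussion on page $826$ of \cite{EpKlassenSu2}) says that abelian representations in the closure of the irreducible ones occur only at $\alpha$ with $\Delta_K(e^{2i\alpha})=0$. The reason is that the twisted cohomology $H^1(\compl{K};\mathfrak{su}(2)_{\mathrm{ad}\rho_\alpha})$ jumps exactly at roots of $\Delta_K(e^{2i\cdot})$; away from these the abelian arc is a smooth isolated stratum near $\rho_\alpha$.
\item If $\rho_\infty$ is reducible but non-abelian, it is conjugate into the upper-triangular (binary dihedral-free) reducibles. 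By Burde--de Rham, a non-abelian reducible with meridian eigenvalue $e^{i\alpha}$ exists if and only if $\Delta_K(e^{2i\alpha})=0$. Hence the conclusion holds in this case as well.
\end{itemize}

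\textbf{Step 3: the irreducible-limit case, which is the main obstacle.} If $\rho_\infty$ is irreducible, then $(\alpha,0)\in H_K\cap A_K$ itself. Here I do not expect a general argument. For the right-handed trefoil, $H_K$ lies on lines $\psi\equiv -6\theta+\pi$, which meet $\psi=0$ at $6\theta\equiv\pi$, where $e^{2i\theta}$ happens to be a root of $t^2-t+1$. However, irreducible representations of the $0$-surgery with $\lambda\mapsto 1$ exist in general (Kronheimer--Mrowka), and nothing forces their meridional eigenvalue to be a root of $\Delta_K$.

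So the first thing I would try is to read the proposition, as the paper's citation of page $826$ suggests, as concerning sequences in $H_K$ whose lifts accumulate at a \emph{reducible} representation. These are exactly the limits occurring at the endpoints of arcs of $H_K$ abutting $A_K$, the situation used later for good arcs and in Proposition \ref{prop: integer surgery on composite}. Under that reading, Steps 1--2 give the full proof.

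If one insists on arbitrary limits, I would check whether the intended applications can be arranged to avoid Step 3. This means choosing $\eta_n$ along a path in $H_K$ leaving $A_K$ through a reducible endpoint, that is, replacing $\eta_n$ by a sequence whose lifts tend to reducibles, using that the set of irreducible points on $\{\psi=0\}$ is closed and disjoint from the reducible endpoints.
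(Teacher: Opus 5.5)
Your Steps 1 and 2 are exactly what the paper's bare citation of \cite[Theorem 19]{EpKlassenSu2} amounts to. You lift to irreducible $\rho_n$ restricting to $\eta_n$ and extract a convergent subsequence in the compact space $\mathcal{R}(K)$. If the limit is the abelian representation $\rho_\alpha$, Klassen's theorem applies: an abelian $\rho_\alpha$ that is a limit of non-abelian representations satisfies $\Delta_K(e^{2i\alpha})=0$. The paper gives no argument beyond this.

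The second bullet of your Step 2 is empty, however. An $SU(2)$-representation is unitary, so an invariant line has an invariant orthogonal complement; hence a reducible representation is diagonalizable, and therefore abelian. The Burde--de Rham non-abelian reducibles live in $SL(2,\mathbb{C})$. They enter only through the jump in twisted cohomology you invoke in the first bullet, never as a possible limit $\rho_\infty$.

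Your Step 3 is not a gap you could close: the proposition is false as literally stated. The trefoil, which you offer as consistent with it, is actually a counterexample. On your own description, the arc of $H_{3_1}$ on $\psi\equiv\pi-6\theta$, $\theta\in(\pi/6,5\pi/6)$, meets $\{\psi\equiv 0\}$ at its endpoints and also at the interior point $\theta=\pi/2$ (since $3\pi\equiv\pi$). There $e^{2i\theta}=-1$ and $\Delta_{3_1}(-1)=3\neq 0$.

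More generally, binary dihedral representations send $\mu$ to a traceless element. They send $\lambda$, which lies in the second commutator subgroup, to $1$. So $(\pi/2,0)\in H_K\cap A_K$ whenever $\det K>1$, whereas $\Delta_K(-1)=\pm\det K$ is odd and hence nonzero for every knot. The constant sequence $\eta_n=(\pi/2,0)$ therefore violates the conclusion. If you want $\eta_n\notin A_K$, the trefoil points $\eta_n=(\pi/2+1/n,\,-6/n)$ do the same.

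So rather than saying you do not expect a general argument, you should exhibit this counterexample and add the missing hypothesis: the lifts must converge to an abelian representation. This is guaranteed, for instance, when $\eta_\infty\notin H_K$. Under that hypothesis your Step 1 limit cannot be irreducible, and Steps 1--2 form a complete proof, identical in substance to the paper's citation.

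Finally, your closing suggestion that the later applications only involve reducible limits is not automatic. For the trefoil, the open arc $\theta\in(\pi/6,\pi/2)$ of $H_{3_1}$ is a good arc. Its lifts near the endpoint $(\pi/2,0)$ converge to an irreducible binary dihedral representation. Applying the proposition to endpoints of good arcs, as in the proof of Proposition~\ref{prop: integer surgery on composite}, therefore needs separate justification. Likewise, your claim that the irreducible points on $\{\psi=0\}$ form a closed set disjoint from the reducible endpoints is asserted without proof.
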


\begin{prop}\label{prop: integer surgery on composite}
    Let $K\subset S^3$ be a non-trvial $SU(2)$-clean knot.
    Every integer surgery on the composite knot $K \# K$ is not $SU(2)$-abelian.
    \begin{proof}
        Let $K_1=K$ and $K_2=K$ be the two copies of $K$ and $K_\#=K_1 \# K_2$.
        Let $r \in \mathbb{Z}$. We will prove that the surgery $K \# K(r)$ is not $SU(2)$-abelian.
        Let $\varphi \colon \partial \compl{K_1} \to \partial \compl{K_2}$ be the diffeomorphism as in Lemma \ref{lemma: int surgery on composite knot},
        therefore
        \[
            K_\#(r)=\compl{K_1} \cup_{\varphi} \compl{K_2}.
        \]
        Let
        $\varphi^\ast \colon \R{\partial \compl{K_2}} \to \R{\partial \compl{K_1}}$ be the induced map as in Proposition \ref{prop: action on integer surgery on T(K)}.
        In particular, $\varphi^\ast(T(K_2,\partial))$ is a subset of $\R{\compl{K_1}}$.
        By \cite[Theorem 3.7]{Mino}, the surgery $K_\#(r)$ is $SU(2)$-abelian if and only if
        \[
            H_{K_1} \cap \varphi^\ast(H_{K_2}) = H_{K_1} \cap \varphi^\ast(A_{K_2}) = A_{K_1} \cap \varphi^\ast(H_{K_2}) = \emptyset.
        \]
        
        Let us recall that, since $K_1=K_2$, the spaces $\R{\compl{K_1}}$ and $\R{\compl{K_2}}$ are the same,
        and therefore the map $\varphi^\ast$ is a self-map.
        If there exists a point $ x \in H_{K_2}$ that is a fixed point for $\varphi^\ast$, meaning
        $x=\varphi^\ast(x)$, then
        \[
            x \in H_{K_1} \cap \varphi^\ast(H_{K_2}) = H_{K} \cap \varphi^\ast(H_{K}) \subset \R{\compl{K_1}}= \R{\compl{K}}.
        \]
        Thus, this intersetion is not empty and the surgery $K_\#(r)$ is not $SU(2)$-abelian by \cite[Theorem 3.7]{Mino}.
        We will show the conclusion by proving that there exists such a fixed point $x \in H_{K_2} = H_{K}$.

        Now we change the coordinates: we parameterize the space $\R{\partial \compl{K}}$ with coordinates
        $(\theta,\psi)$ as in \eqref{eq: coordinates of R(partial E(K))}.
        Proposition \ref{prop: action on integer surgery on T(K)} implies that the
        fixed points of $\varphi^\ast$ are the points 
        \[ 
            (\theta,\psi) \in \R{\partial \compl{K}} \quad \text{such that} \quad r \theta + 2 \psi = 0.
        \]
        
        If $K(r)$ is not $SU(2)$-abelian, then as an application of Proposition \ref{prop: satellite P(J) ammette una chirugia SU(2)-abelian then}, the surgery $K_{\#}(r)$ is not $SU(2)$-abelian and we are done.
        We can therefore suppose that $K(r)$ is $SU(2)$-abelian.
        By Remark \ref{rmk: surgery and the line}, this implies that the line $L_r^0 = \{(\theta,\psi) \in \R{\partial \compl{K}} \mid r\theta + \psi = 0\}$ does not intersect the space $H_K$.
        By Corollary \ref{cor: every walk in R(partial K) from P to Q},
        there exists a good arc $\Gamma \subset H_K$ that connects the points
        \[
            (\alpha,0) \in \R{\compl{K}} \quad \text{and} \quad (\beta,2\pi) \in \R{\partial \compl{K}}.
        \]
        Without loss of generality, we can suppose that $\Gamma$ does not intersect the line $L_r^0$, because if it did $K(r)$ would not be $SU(2)$-abelian.
        Since the knot $K$ is $SU(2)$-clean, we have that $\Delta_K(e^{\frac{2\pi i n}{r}}) \neq 0$ and for every $n \in \{1, \cdots, r\}$.
        By Proposition \ref{prop: limit prop of Klassen}, it does not exist
        a $k \in\{1, \cdots, r\}$ such that 
        $\alpha$ (resp. $\beta$) is equal to $\frac{2\pi k}{r}$.
        Therefore there exists a $k \in \{1, \cdots, r-1\}$ such that
        \[
            \alpha \in \left( \frac{2\pi k}{r}, \frac{2\pi (k+1)}{r}\right) \quad \text{and} \quad \beta \in \left( \frac{2\pi (k+1)}{r}, \frac{2\pi (k+2)}{r}\right).
        \]
        Figure \ref{figure: good arc for SU(2)-clean knot} shows the situation we are handling: here the line
        $L_r^0$ is drawn in blue and the good arc $\Gamma \subset H_K$ is drawn in magenta.

        Let $S \subset \R{\compl{K}}$ be the segment, connecting the points
        \begin{equation}
            \label{eq: endpoints of S}
            \left(\frac{2\pi k}{r}, 0\right), \left(\frac{2\pi (k+2)}{r}, 2\pi \right) \subset L_r^0\cap \{\psi=0\} \subset \R{\compl{K}},
        \end{equation}
        such that the interior of $S$ does not intersect the union $L_r^0 \cup \{\psi=0\}$.
        The segment $S$ is
        represented in Figure \ref{figure: good arc for SU(2)-clean knot} in yellow, and is defined as
        \[
            S = \left\{ (\theta,\psi) \in \R{\compl{K}} \mid | \theta \in \left[ \frac{ 2\pi k}{r}, \frac{2\pi (k+2)}{r}\right], \psi = - \frac{r}{2} \theta + (k+2) \pi \right\}.
        \]
        Therefore $S$ contains fixed points of $\varphi^\ast$.
        Since $\alpha$ and $\beta$
        lie strictly between the intervals described in \eqref{eq: endpoints of S},
        the arc $\Gamma$ must cross the segment $S$. Therefore there exists a point
        $x \in \Gamma \cap S$, as shown in Figure \ref{figure: good arc for SU(2)-clean knot}.
        This point $x$ is a fixed point of $\varphi^\ast$ and it lies in $H_K$, therefore, as we explained above,
        the intersection $H_K \cap \varphi^\ast(H_K)$ is not empty and the surgery $K \# K(r)$ is not $SU(2)$-abelian.
    \end{proof}
\end{prop}

Let $K_1$ and $K_2$ be two knots in $S^3$. We define $K_\# \subset S^3$ as the composite knot $K_1\#K_2$. For $i \in \{1,2,\#\}$, let $\{\mu_i,\lambda_i\}$ be an ordered basis of meridian and longitude for $K_i$. We recall the presentation \eqref{eq: presentazione composite knot}:
\[
    \fund{\compl{K_\#}} = \frac{\fund{\compl{K_1}}\ast \fund{\compl{K_2}}}{\normalsubgroup{\mu_1=\mu_2=\mu_\# , \lambda_\#=\lambda_1 \lambda_2}}.
\]
For $i \in \{1,2,\#\}$, we equip the space $\R{\partial \compl{K_i}}$ with coordinates $(\theta_i,\psi_i)$ as in \eqref{eq: coordinates of R(partial E(K))} with respect to the ordered basis $\{\mu_i,\lambda_i\}$.
We define the subset of $\R{\partial \compl{K_\#}}$
\begin{align}
\label{eq: description T(K) composite knot}
    T \coloneqq \left\{(\theta_\#,\psi_\#) \middle| \theta_\# =\theta_1=\theta_2, \psi_\#=\psi_1+\psi_2, (\theta_1,\psi_1) \in T(K_1,\partial), (\theta_2,\psi_2) \in T(K_2,\partial) \right\}.
\end{align}
\begin{lemma}\label{lemma: descriviamo composite knots}
    Let $K_1$, $K_2$, and $K_\#$ be knots as above. The set $T(K_\#,\partial) \subset \R{\partial \compl{K_\#}}$ equals the set in \eqref{eq: description T(K) composite knot}. Furthermore,
    \[
    H_{K_\#} = \left\{(\theta_\#,\psi_\#) \in T(K_\#,\partial) \middle| \text{either }  (\theta_1,\psi_1) \in H_{K_1} \text{ or } (\theta_2,\psi_2) \in H_{K_2} \right\}.
\]
\begin{proof}
    Let $T$ be the set in \eqref{eq: description T(K) composite knot}. Let $\rho \colon \fund{\compl{K_\#}} \to SU(2)$ be a representation such that $\restr{\rho}{\fund{\partial \compl{K_{\#}}}}$ has image in $\Ui$.
    We call $\rho_1$ and $\rho_2$ the restrictions of $\rho$ to $\fund{\compl{K_1}}$ and $\fund{\compl{K_2}}$. By definition, we have that
    \[
        \restr{\rho}{\fund{\partial \compl{K_\#}}} \in T(K_\#,\partial), \quad \rho_1 \in T(K_1,\partial), \quad \text{and} \quad \rho_2 \in T(K_2,\partial).
    \]
    The presentation in \eqref{eq: presentazione composite knot} implies that
    \[
    \rho_1(\lambda_1)\rho_2(\lambda_2)=\rho(\lambda_\#) \quad \text{and} \quad \rho_1(\mu_1)=\rho_2(\mu_1)=\rho(\mu_\#).
    \]
    This shows that $T(K_\#,\partial) \subseteq T$.
    
    Conversely, let $\eta \colon \fund{\partial \compl{K_\#}} \to \Ui$ be a representation in $T$. By definition, there exists a pair of $SU(2)$-representations $\rho_1$ and $\rho_2$ of $\fund{\compl{K_1}}$ and $\fund{\compl{K_2}}$ such that
    \[
    \rho_1(\lambda_1)\rho_2(\lambda_2)=\eta(\lambda_\#) \quad \text{and} \quad \rho_1(\mu_1)=\rho_2(\mu_1)=\eta(\mu_\#).
    \]
    The presentation \eqref{eq: presentazione composite knot} implies that $\eta \in T(K_\#,\partial)$ and this concludes that $T \subseteq T(K_\#,\partial)$.

    Let us focus on the second conclusion.
    Let $\eta \in T(K_\#,\partial)$ and let $\rho \colon \fund{ \compl{K_\#}} \to SU(2)$ be an extension.
    Again, the presentation in \eqref{eq: presentazione composite knot} implies that a representation $\fund{\compl{K_\#}} \to SU(2)$ has abelian image if and only if its restrictions to $\fund{\compl{K_1}}$ and $\fund{\compl{K_2}}$ both have abelian image.
    This implies that $\eta \in H_{K_\#}$ if and only if either $\restr{\rho}{\fund{\compl{K_1}}}$ or $\restr{\rho}{\fund{\compl{K_2}}}$ is irreducible.
    Equivalently, if and only either
    \[
        \restr{\rho}{\fund{\partial \compl{K_1}}} \in H_{K_1} \quad \text{or} \quad\restr{\rho}{\fund{\partial \compl{K_2}}} \in H_{K_2}.
    \]
\end{proof}
\end{lemma}

\begin{figure}[t]
        \centering
        \begin{tikzpicture}[>=latex,scale=2.4]
            \draw[->] (-.35,0) -- (2.35,0) node[right] {$\theta_K$};
            \foreach \x /\n in {0.2/$\frac{2\pi k}{r}$,1/$\frac{2\pi (k+1)}{r}$,} \draw[shift={(\x,0)}] (0pt,2pt) -- (0pt,-2pt) node[below] {\tiny \n};
            \foreach \x /\n in {1/$\frac{2\pi (k+1)}{r}$,1.8/$\frac{2\pi (k+2)}{r}$,} \draw[shift={(\x,2)}] (0pt,2pt) -- (0pt,-2pt) node[below] {\tiny \n};
            \draw[->] (0,-.35) -- (0,2.35) node[below right] {$\psi_K$};
            \foreach \y /\n in {1/$\pi$,2/$2\pi$}
            \draw[shift={(0,\y)}] (2pt,0pt) -- (-2pt,0pt) node[left] {\tiny \n};
            \node[below left] at (0,0) {\tiny $0$};
            \draw (0,2) -- (2,2);
            
              \draw [blue] (0.2,0)-- (1,2);
              \draw [blue] (1,0)-- (1.8,2);

            \draw [yellow] (0.2,0) --(1.8,2); 

            
              \draw [magenta, thick] plot[smooth] coordinates {(0.6,0) (1,0.5)(0.8,1) (1.3,1.5) (1.2,2)};
        \end{tikzpicture}
        \caption{The good arc $\Gamma$ in $R_{U(1)}(\partial E(K_1))$ drawn in magenta. The line $L_{r}^0$ is in blue and the in yellow are the
        fixed points of $\varphi^\ast$.}
        \label{figure: good arc for SU(2)-clean knot}
\end{figure}

Let $\nicefrac{r}{s} \in \mathbb{Q}$ be a slope, if $r=2r'$, then with an abuse of notation we define $K(\nicefrac{r}{2s})$ as $K(\nicefrac{r'}{s})$.  

\begin{lemma}\label{lemma: K2(r/s) SU(2)ab then r/2s SU(2)ab}
    Let $K$ be a non-trivial knot. We denote by $K^2$ the double knot $K \# K$. If $K^2(\nicefrac{r}{s})$ is $SU(2)$-abelian, then $K(\nicefrac{r}{2s})$ is $SU(2)$-abelian as well.
    \begin{proof}
        Let us suppose that $K^2(\nicefrac{r}{s})$ is $SU(2)$-abelian. Thus,
        the manifold $K(\nicefrac{r}{s})$ is $SU(2)$-abelian by Lemma \ref{lemma: K2 > K1, se K1 ha una chiurgia irr allora K2 la ha}. Lemma \ref{lemma: descriviamo composite knots} implies that
        \[
            J \coloneqq \left\{ (\theta_K,2\psi_K) \middle| (\theta_K,\psi_K) \in H_K\right\} \subset \R{\partial\compl{K^2}}
        \]
        is a subset of $H_{K^2} \subset T(K^2,\partial)$. Hence, since we supposed that $K^2(\nicefrac{r}{s})$ is $SU(2)$-abelian, then $J \cap L_{\nicefrac{r}{s}}^0$ is empty by Remark \ref{rmk: surgery and the line}. In particular, 
        \[
             -\frac{r}{s}\theta_K + \frac{2\pi j}{q} \neq 2 \psi_K,
        \]
        with $(\theta_K,\psi_K) \in H_K$, $\theta_K \in [0,2\pi]$, and $j \in \left\{ 1, \cdots, |q|\right\}$. Therefore,
        \[
             \left(\theta_K,-\frac{r}{2s}\theta_K + \frac{\pi j}{q} \right) \neq (\theta_K,\psi_K) \in H_K,
        \]
         with $(\theta_K,\psi_K) \in H_K$, $\theta_K \in [0,2\pi]$, and $j \in \left\{ 1, \cdots, |q|\right\}$. As we said in Remark \ref{rmk: surgery and the line}, this implies that $L_{\nicefrac{r}{2s}}^0$ does not intersect $H_K$. Hence, $K(\nicefrac{r}{2s})$ is $SU(2)$-abelian.
    \end{proof}
\end{lemma}

\begin{lemma}\label{lemma: r/s and r/2s then |s|<=2}
    Let $K$ be a non-trivial knot and $\nicefrac{r}{s} \in \mathbb{Q}$ such that $K(\nicefrac{r}{s})$ and $K(\nicefrac{r}{2s})$ are $SU(2)$-abelian manifolds.
    If $r$ is odd, then $|s|= 1$. If $r$ is even, then $|s| \in \{1,3\}$.
    \begin{proof}
        If $r$ is odd, then by \cite[Corollary 1.10]{SU2cyclicSurgeryFormula}, we obtain that
        \[
        2\geo{\nicefrac{r}{s}}{\nicefrac{r}{2s}}=2|2rs-rs|=2|rs| \le 2 |r|.
        \]
        Therefore, $|s| = 1$.
        If $r=2r'$ is even, then by \cite[Theorem 1.9]{SU2cyclicSurgeryFormula} we obtain that
        \[
            \geo{\nicefrac{2r'}{s}}{\nicefrac{r'}{s}}=|2r's-r's|= |r's| \le |2r'|+|r'|=3|r'|,
        \]
        which implies that $|s| \le 3$. Since $\gcd(r,s)=1$, we conclude that $|s|\neq 2$.
    \end{proof}
\end{lemma}

 \begin{figure}[t]
    \centering
        \begin{tikzpicture}[>=latex,scale=1.85]
            \draw[->] (-.5,0) -- (4.5,0) node[right] {$\theta_K$};
            \foreach \x /\n in {0.66/$\frac{\pi (k+1)}{r'}$,1.3/$\frac{\pi (k+2)}{r'}$,2/$\frac{\pi (k+3)}{r'}$,2.66/$\frac{\pi (k+4)}{r'}$,3.3/$\frac{\pi (k+5)}{r'}$,4/$\frac{\pi (k+6)}{r'}$} \draw[shift={(\x,0)}] (0pt,2pt) -- (0pt,-2pt) node[below] {\tiny \n};
            \draw[->] (0,-.5) -- (0,3.5) node[below right] {$\psi_K$};
            \foreach \y /\n in {1.5/$\pi$,3/$2\pi$}
            \draw[shift={(0,\y)}] (2pt,0pt) -- (-2pt,0pt) node[left] {\tiny \n};
            \node[below left] at (0,0) {\tiny $\frac{\pi k}{r'}$};
            \draw (0,0) rectangle (4,3);


            \draw [blue, very thick] (0,0)-- (2,3);
            \draw [blue, very thick] (0.66,0)-- (2.66,3);
            \draw [blue, very thick] (1.3,0)-- (3.3,3);
            \draw [blue, very thick] (2,0)-- (4,3);
            \draw [blue, very thick] (2.66,0)-- (4,2);
            \draw [blue, very thick] (0,2)-- (0.66,3);
            \draw [blue, very thick] (3.3,0)-- (4,1);
            \draw [blue, very thick] (0,1)-- (1.3,3);


             \draw [cyan, very thick] (0,0)-- (4,3);
             \draw [cyan, very thick] (1.3,0)-- (4,2);
             \draw [cyan, very thick] (0,2)-- (1.3,3);
             \draw [cyan, very thick] (2.66,0)-- (4,1);
             \draw [cyan, very thick] (0,1)-- (2.66,3);

            \draw[dotted, very thick] (0,0)--(2.66,3);
            \draw[dashed,  thick] (0,0)--(1.33,3);
            \draw[dashed,  thick] (1.33,0)--(2.66,3);

            \draw [orange,very  thick] plot[smooth] coordinates {(0,0) (0.66,3/4+0.1)(1.33,6/4)(2,9/4-0.1) (2.66,3)};

             \draw [orange, thick, dotted] plot[smooth] coordinates {(0,0) (0.66,6/4+0.2)(1.3,3)};
             \draw [orange, thick, dotted] plot[smooth] coordinates {(1.3,0) (2,6/4-0.2) (2.66,3)};


        \end{tikzpicture}
        \caption{The lines $L_{\nicefrac{r}{3}}^0$ and $L_{\nicefrac{r'}{3}}^0$ in blue and cyan. The good arc $\Gamma$ is drawn in orange.}
        \label{Figure for L(r/3) and L(r/6)}
\end{figure}

\begin{repteo}{teo: double knot}
    Let $K$ be a non-trivial $SU(2)$-clean knot. Then $K^2=K\#K$ is not $SU(2)$-abelian.
    \begin{proof}
        Let $\nicefrac{r}{s} \in \mathbb{Q}$. We will prove that if $K^2(\nicefrac{r}{s})$ is $SU(2)$-abelian, then $\nicefrac{r}{s}=\nicefrac{1}{0}$.
        
        Let us suppose that $K^2(\nicefrac{r}{s})$ is $SU(2)$-abelian. According to Lemma \ref{lemma: K2(r/s) SU(2)ab then r/2s SU(2)ab}, $K(\nicefrac{r}{s})$ and $K(\nicefrac{r}{2s})$ are $SU(2)$-abelian.
        If $r \equiv_2 1$, then Lemma \ref{lemma: r/s and r/2s then |s|<=2} implies that $|s| =1$. If $r \equiv_2 0$, then Lemma \ref{lemma: r/s and r/2s then |s|<=2} implies that $|s| \in \{1,3\}$.

        If $|s|=1$, then the conclusion holds by Proposition \ref{prop: integer surgery on composite}. Let us suppose that $|s|=3$ and therefore $r \equiv_20$. We write $r=2r'$.
        The surgeries $K(\nicefrac{2r'}{s})$ and $K(\nicefrac{r'}{s})$ are both $SU(2)$-abelian by assumption. As we said in Remark \ref{rmk: surgery and the line}, this implies that
        \begin{align}
            \label{eq: una volta grid}
            H_K \cap
            \Lambda'= \emptyset \quad \text{where} \quad \Lambda' \coloneq
            \left( L^0_{\nicefrac{2r'}{3}} \cup L^0_{\nicefrac{r'}{3}} \right) \subset \R{\partial \compl{K}}.
        \end{align}

   We recall that by Definition \ref{defn: linee p/q}, the line $L_{\nicefrac{2r'}{3}}^0$ is made of segments connecting the points
     \[
        \left( \frac{\pi k}{r'},0\right) \in \R{\partial \compl{K}} \quad \text{and} \quad \left( \frac{\pi (k+3)}{r'},0\right) \in \R{\partial \compl{K}}
    \]
    for a
    $k \in \{0,\cdots,2r'-1\}$.
    Similarly, line $L_{\nicefrac{r'}{3}}^0$ is made of segments connecting the points
    \[
         \left( \frac{2\pi k'}{r'},0\right)  \in \R{\partial \compl{K}} \quad \text{and} \quad  \left( \frac{2\pi (k'+3)}{r'},0\right)\in \R{\partial \compl{K}},
   \]
    with $k' \in \{0,\cdots,r'-1\}$.
    Thus, the intersection $L_{\nicefrac{2r'}{3}}^0 \cap \{\psi_K=0\}$ is the set of points of the form $ \left( \frac{\pi k}{r'},0\right)$ and the intersection $L_{\nicefrac{r'}{3}}^0 \cap \{\psi_K=0\}$ is made of points with $k$ even.
    The grid $\Lambda'$ and lines $L_{\nicefrac{2r'}{3}}^0$ and $L_{\nicefrac{r'}{3}}^0$ are drawn in Figure \ref{Figure for L(r/3) and L(r/6)}.
    
    Let $\Gamma \subset H_K$ be a good arc of $K$. Such an arc exists as a consequence of Corollary \ref{cor: every walk in R(partial K) from P to Q}.
    Since $\Gamma \subset H_K$, the identity \eqref{eq: una volta grid} implies that $\Gamma \cap \Lambda' = \emptyset$. 
    Figure \ref{Figure for L(r/3) and L(r/6)} shows that there exists a $k \in \{0,\cdots,2r'-1\}$ such that the arc $\Gamma$ connects the points
    \[
        \left(\frac{\pi k}{r'},0\right) \in \R{\partial \compl{K}} \quad \text{and} \quad \left(\frac{\pi (k+4)}{r'},0\right) \in \R{\partial \compl{K}}.
    \]
    Therefore, $\Gamma$ is homotopic in $\R{\partial \compl{K}}$ relative $\partial \Gamma$ to a segment of slope $\nicefrac{2r'}{4}=\nicefrac{r'}{2}$. This segment is represented in Figure \ref{Figure for L(r/3) and L(r/6)} with the dotted segment.
    As an application of Lemma \ref{lemma: descriviamo composite knots},
    \[
        \Gamma^2 \coloneqq \left\{(\theta_\#,\psi_\#) \in \R{\partial \compl{K^2}} \middle| \theta_\#=\theta_K, \psi_\# =  2\psi_K, (\theta_K,\psi_K)\in \Gamma\right\} \subset H_{K^2}.
    \]
    The arc $\Gamma^2$ is represented in Figure \ref{Figure for L(r/3) and L(r/6)} with an orange dotted segment.
    Therefore, $\Gamma^2$ connects the points
    \[
    \left(\frac{\pi k}{r'},0\right) \in \R{\partial \compl{K^2}} \quad \text{and} \quad \left(\frac{\pi (k+4)}{r'},0\right) \in \R{\partial \compl{K^2}}
    \]
    and it is homotopic in $\R{\partial\compl{K^2}}$ relative $\partial \Gamma^2$ to a segment of slope $2(\nicefrac{r'}{2})=r'$. This segment is represented in Figure \ref{Figure for L(r/3) and L(r/6)} with the dashed segment.
    In particular, this segment intersects the line $L_{\nicefrac{2r'}{3}}^0=L_{\nicefrac{r}{3}}^0$ three times: two points are $\partial \Gamma^2$ and a third middle one.
    This implies that $\Gamma^2$ intersects $L_{\nicefrac{2r'}{3}}^0=L_{\nicefrac{r}{3}}^0$ and therefore that $H_{K^2} \cap L_{\nicefrac{2r'}{3}}^0 \neq \emptyset$.
    According to Remark \ref{rmk: surgery and the line}, this implies that $K^2(\nicefrac{r}{3}) = K^2(\nicefrac{r}{s})$ is not $SU(2)$-abelian.
    This is a contradiction since we assumed that $K^2(\nicefrac{r}{s})$ is $SU(2)$-abelian.

    We conclude that if $K^2(\nicefrac{r}{s})$ is $SU(2)$-abelian, then $\nicefrac{r}{s}=\nicefrac{1}{0}$.
\end{proof}
\end{repteo}

\section{Menagerie of non-SU(2)-abelian knots}

In this section we prove Corollary \ref{cor: composite torus knots are not SUa} and Corollary \ref{cor: knots up to 9 crosssings}.
We start by showing a lemma that will be the main tool in Corollary \ref{cor: composite torus knots are not SUa}.

\begin{lemma}\label{lemma: stepping stone}
    Let $T_{p_1,p_2} \subset S^3$ be a torus knot with $2 \le p_1 < p_2$.
    If $p_1=2$, then
    \[
        H_{T_{p_1,p_2}} = \left\{(\theta,\psi) \in \R{\partial \compl{T_{p_1,p_2}}} \middle| 2p_2 \theta -\psi = \pi\right\}_{\theta \in \left( \frac{\pi}{p_2}, \pi-\frac{\pi}{p_2}   \right) \cup \left( \pi+ \frac{\pi}{p_2}, 2\pi -\frac{\pi}{p_2}   \right)}.
    \]
    If $p_1 \ge 3$, then
    \[
        \left\{(\theta,\psi) \in \R{\partial \compl{T_{p_1,p_2}}} \middle| p_1p_2 \theta -\psi = 0\right\}_{\theta \in \left( \frac{2\pi}{p_1p_2}, \pi-\frac{2\pi}{p_1p_2}\right) \cup \left( \pi+ \frac{2\pi}{p_1p_2}, 2\pi -\frac{2\pi}{p_1p_2}   \right)} \subseteq H_{T_{p_1,p_2}}.
    \]
    \begin{proof}
        The claim follows from Lemmas $7.8$ and $7.13$ in \cite{Mino}.
    \end{proof}
\end{lemma}

\begin{repcor}{cor: composite torus knots are not SUa}
    Let $T_{p_1,p_2}$ and $T_{p_3,p_4}$ be two non-trivial torus knots. Then $T_{p_1,p_2} \# T_{p_3,p_4}$ is not $SU(2)$-abelian.
    \begin{proof}
        Let $K$ be the composite knot $T_{p_1,p_2} \# T_{p_3,p_4}$ and $\nicefrac{r}{s} \in \mathbb{Q} \cup \{\nicefrac{1}{0}\}$ a slope such that $K(\nicefrac{r}{s})$ is $SU(2)$-abelian. We are going to show that $\nicefrac{r}{s}=\nicefrac{1}{0}$.

        By Proposition \ref{prop: satellite P(J) ammette una chirugia SU(2)-abelian then}, if $K(\nicefrac{r}{s})$ is $SU(2)$-abelian, then $T_{p_1,p_2}(\nicefrac{r}{s})$ and $T_{p_3,p_4}(\nicefrac{r}{s})$ are both $SU(2)$-abelian. As an application of Theorem \ref{thm:iterated-cables}, either
        \[
            p_1p_2=p_3p_4 \quad \text{or} \quad |p_1p_2-p_3p_4|\le 2.
        \]
        Remark \ref{rmk mirror image} implies that a knot $K$ is not an $SU(2)$-abelian knot if and only if its mirror image $\overline{K}$ is not an $SU(2)$-abelian knot.
        We recall that $\overline{K_1 \# K_2} = \overline{K_1} \# \overline{K_2}$.
        Moreover, since $\overline{T_{p_1,p_2}} = T_{-p_1,p_2} = T_{p_1,-p_2}$ and $T_{p_1,p_2} = T_{-p_1,-p_2} = T_{p_2,p_1}$ (see \cite[page 55]{Rolfsen}), we may assume all $p_i$ are positive without loss of generality.

        Lemma \ref{lemma: stepping stone} implies that there exist $0 <x_1<x_2<x_3<x_4< 2\pi$ such that:
        \begin{itemize}
            \item there exists a $\psi \in (0,2\pi)$ such that $H_{T_{p_1,p_2}}$ contains a segment connecting the points of $\R{\partial \compl{T_{p_1,p_2}}}$ of coordinates $(x_1,\psi)$ and $(x_4,0)$;
            \item the segment of the previous point contains $(x_2,0) \in \R{\partial \compl{T_{p_1,p_2}}}$;
            \item $H_{T_{p_3,p_4}}$ contains a segment connecting the points of $\R{\partial \compl{T_{p_3,p_4}}}$ of coordinates $(x_1,\psi)$ and $(x_3,0)$.
        \end{itemize}
        These sets are represented in Figure \ref{figure: T1 proof T1T2 not SU(2)-abelian} and Figure \ref{figure: T2 proof T1T2 not SU(2)-abelian} respectively.
        These two subsets define a subset of $H_{T_{p_1,p_2}\#T_{p_3,p_4}}$ as explained in Lemma \ref{lemma: descriviamo composite knots}. This subset of $H_{T_{p_1,p_2}\#T_{p_3,p_4}}$ is shown in Figure \ref{figure: closed path composite torus knot}. In particular, Figure \ref{figure: closed path composite torus knot} shows that $H_{T_{p_1,p_2}\#T_{p_3,p_4}}$ contains a closed path $\Gamma$ which is homotopic to the circle $\{\psi_\#=0\}$. Therefore, the circle $L^0_{\nicefrac{r}{s}}$ has at least $|s|$ intersections with $\Gamma$. 
        Remark \ref{rmk: surgery and the line} implies that $K(\nicefrac{r}{s})$ is $SU(2)$-abelian if and only if $|s|=0$ and therefore $\nicefrac{r}{s}=\nicefrac{1}{0}$.
    \end{proof}
\end{repcor}

\begin{figure}[t]
    \centering
    \begin{subfigure}[b]{0.45\textwidth}
        \centering
         \begin{tikzpicture}[>=latex,scale=1.5]
            \draw[->] (-.5,0) -- (2.5,0) node[right] {$\psi_1$};
            \foreach \x /\n in {1/$\pi$,2/$2\pi$} \draw[shift={(\x,0)}] (0pt,2pt) -- (0pt,-2pt) node[below] {\tiny \n};
            \draw[->] (0,-.5) -- (0,2.5) node[below right] {$\theta_1$};
            \foreach \y /\n in {0.4/$x_1$,0.6/$x_2$,1.4/$x_3$,1.6/$x_4$}
            \draw[shift={(0,\y)}] (2pt,0pt) -- (-2pt,0pt) node[left] {\tiny \n};
            \node[below left] at (0,0) {\tiny $0$};
             \draw (0,0) rectangle (2,2);
             \draw (1,0) -- (1,2); 
             \draw[dotted] (0,0.4)--(2,0.4);
             \draw[dotted] (0,0.6)--(2,0.6);
             \draw[dotted] (0,1.4)--(2,1.4);
             \draw[dotted] (0,1.6)--(2,1.6);

             \draw [orange,  ultra thick] (0,0)--(0,2);
             \draw [orange,  ultra thick] (2,0)--(2,2);

             \draw[magenta,  ultra thick] (0,1.6)--(2,0.6);
             \draw[magenta,  ultra thick](0,0.6)--(0.4,0.4);
        \end{tikzpicture}
        \caption{A subset of $H_{T_{p_1,p_2}} \subset \R{\partial \compl{T_{p_1,p_2}}}$.}
        \label{figure: T1 proof T1T2 not SU(2)-abelian}
    \end{subfigure}
    \quad
     \begin{subfigure}[b]{0.45\textwidth}
     \centering
        \begin{tikzpicture}[>=latex,scale=1.5]
            \draw[->] (-.5,0) -- (2.5,0) node[right] {$\psi_2$};
            \foreach \x /\n in {1/$\pi$,2/$2\pi$} \draw[shift={(\x,0)}] (0pt,2pt) -- (0pt,-2pt) node[below] {\tiny \n};
            \draw[->] (0,-.5) -- (0,2.5) node[below right] {$\theta_2$};
            \foreach \y /\n in {0.4/$x_1$,0.6/$x_2$,1.4/$x_3$,1.6/$x_4$}
            \draw[shift={(0,\y)}] (2pt,0pt) -- (-2pt,0pt) node[left] {\tiny \n};
            \node[below left] at (0,0) {\tiny $0$};
             \draw (0,0) rectangle (2,2);
             \draw (1,0) -- (1,2); 

            \draw [orange,  ultra thick] (0,0)--(0,2);
             \draw [orange,  ultra thick] (2,0)--(2,2);
             \draw[dotted] (0,0.4)--(2,0.4);
             \draw[dotted] (0,0.6)--(2,0.6);
             \draw[dotted] (0,1.4)--(2,1.4);
             \draw[dotted] (0,1.6)--(2,1.6);
             \draw[magenta,  ultra thick] (0,1.4)--(2,0.4);
        \end{tikzpicture}
        \caption{A subset of $H_{T_{p_3,p_4}} \subset \R{\partial \compl{T_{p_3,p_4}}}$.}
        \label{figure: T2 proof T1T2 not SU(2)-abelian}
    \end{subfigure}
    \\[5pt]
    \begin{subfigure}[b]{0.45\textwidth}
     \centering
        \begin{tikzpicture}[>=latex,scale=1.5]
            \draw[->] (-.5,0) -- (2.5,0) node[right] {$\psi_\#$};
            \foreach \x /\n in {1/$\pi$,2/$2\pi$} \draw[shift={(\x,0)}] (0pt,2pt) -- (0pt,-2pt) node[below] {\tiny \n};
            \draw[->] (0,-.5) -- (0,2.5) node[below right] {$\theta_\#$};
            \foreach \y /\n in {0.4/$x_1$,0.6/$x_2$,1.4/$x_3$,1.6/$x_4$}
            \draw[shift={(0,\y)}] (2pt,0pt) -- (-2pt,0pt) node[left] {\tiny \n};
            \node[below left] at (0,0) {\tiny $0$};
             \draw (0,0) rectangle (2,2);
             \draw (1,0) -- (1,2); 

            \draw [orange,  ultra thick] (0,0)--(0,2);
             \draw [orange,  ultra thick] (2,0)--(2,2);
             \draw[dotted] (0,0.4)--(2,0.4);
             \draw[dotted] (0,0.6)--(2,0.6);
             \draw[dotted] (0,1.4)--(2,1.4);
             \draw[dotted] (0,1.6)--(2,1.6);
             \draw[magenta,  ultra thick] (0,1.4)--(2,0.4);
             \draw[magenta,  ultra thick] (0,1.6)--(2,0.6);
             \draw[purple,  ultra thick] (0.4,1.4)--(2,1);
             \draw[purple,  ultra thick] (0,1)--(2,0.5);
             \draw[purple,  ultra thick] (0,0.5)--(0.4,0.4);
             \draw[magenta,  ultra thick](0,0.6)--(0.4,0.4);
              \draw[purple, dotted, thick] (2,0.5)--(2.4,0.4);
              \draw[magenta, dotted, thick] (2,0.6)--(2.4,0.4);
               \draw[line width=4pt, yellow, decorate, opacity=0.4](0.4,1.4) -- (2,1);
               \draw[line width=4pt, yellow, decorate, opacity=0.4](0,1) -- (2,0.5);
               \draw[line width=4pt, yellow, decorate, opacity=0.4](0,0.5) -- (0.4,0.4);
               \draw[line width=4pt, yellow, decorate, opacity=0.4](0.4,1.4) -- (2,0.6);
               \draw[line width=4pt, yellow, decorate, opacity=0.4](0,0.6) -- (0.4,0.4);
        \end{tikzpicture}
        \caption{A subset of $H_{T_{p_1,p_2} \#T_{p_3,p_4}}$.}
     \label{figure: closed path composite torus knot}
    \end{subfigure}
     \caption{In the pictures the coordinates are as usual.}
     \label{figure: proof composite torus knots}
\end{figure}

\begin{exmp}\label{exmp: un altro po di nodi}
Let $K$ be in the set 
    \[
    \left\{8_{18}, 9_{37}, 9_{40}, 10_{58},10_{60},10_{74},10_{122},10_{136},10_{138}\right\}
\]
It is proven in \cite{ApartialOrderInTheKnotTableII} that there exists a $2$-bridge non-toroidal knot $K_0 \subset S^3$ such that $K \ge K_0$.
According to \cite[Theorem 1.1]{bowden2007winding}, the knot $K_0$ is not $SU(2)$-abelian.
By Corollary \ref{cor: K2 > K1, if K1 is not SU(2)abelian, then K2 is not}, $K$ is not $SU(2)$-abelian.
\end{exmp}

\begin{lemma}\label{lemma: trefoil 5meridian are not SU(2)abelian}
    Let $K$ be the trefoil. If $r \ge3$, then the group
    \[
    \frac{\fund{\compl{K}}}{\normalsubgroup{\mu^r}}
    \]
    admits an irreducible $SU(2)$-representation.
\begin{proof}
    We consider the torus $\R{\partial \compl{K}}$ with $(\theta,\psi)$ coordinates as in \eqref{eq: coordinates of R(partial E(K))}.
    It is well known that
    \[
        H_{K} = \left\{(\theta,\psi) \in L_{6}^0 \middle| \theta \in \left( \frac{\pi}{6},\frac{5\pi}{6} \right) \cup  \left( \frac{7\pi}{6},\frac{11\pi}{6} \right) \right\} \subset \R{\partial \compl{K}}.
    \]
    Here the line $L_6^0$ is as in Definition \ref{defn: linee p/q}.
    See Figure \ref{figure: trefoild mu5}.
    It is easy to see that, since $r \ge 3$, there exists a $k \in \{0,\cdots r-1\}$ such that
    \[
        \frac{2\pi k}{r} \in \left( \frac{\pi}{6},\frac{5\pi}{6} \right) \cup  \left( \frac{7\pi}{6},\frac{11\pi}{6} \right).
    \]
    This implies that there exists an irreducible representation $\rho \fund{\compl{K}}\to SU(2)$ such that
    \[
        \rho(\mu) = \begin{bmatrix}
            e^{\frac{2\pi i k}{r}} & 0 \\ 0 & e^{-\frac{2\pi i k}{r}} 
        \end{bmatrix}
        \quad, \text{and hence} \quad \rho(\mu)^r=1.
    \]
    This representation factors through the group $\fund{\compl{K}}/\normalsubgroup{\mu^r}$. Thus, the latter is not an $SU(2)$-abelian group. 
    \end{proof}
\end{lemma}
\begin{lemma}\label{lemma: figure eight 5 7 meridian are not SU(2)abelian}
    Let $K$ be the figure eight knot. If $r \ge 3$, then the group
    \[
    \frac{\fund{\compl{K}}}{\normalsubgroup{\mu^r}},
    \]
    admits an irreducible $SU(2)$-representation, where $\mu \in \fund{\compl{K}}$ is the homotopy class of the meridian.
    \begin{proof}
    The proof is similar to the one of Lemma \ref{lemma: trefoil 5meridian are not SU(2)abelian}.
Let $K=4_1$ be the figure eight knot.
Let
\[
    \rho \colon \fund{\compl{K}} \to SU(2)
\]
be an irreducible $SU(2)$-representation.
By \cite[Proposition 5.4]{DescriptionLongitude} there exists a $s \in [\nicefrac{\pi}{3},\nicefrac{2\pi}{3}] \cup [\nicefrac{4\pi}{3},\nicefrac{5\pi}{3}] $ such that
\[
\rho(\mu) = \begin{bmatrix}
    e^{ i s}&  0 \\ 0 &e^{- i s}
\end{bmatrix},
\]
up to conjugation. See Figure \ref{figure: figure eight knot}. It is easy to see that, since $r \ge 3$, there exists a $k \in \{0,\cdots r-1\}$ such that
    \[
        \frac{2\pi k}{r} \in \left[\frac{\pi}{3},\frac{2\pi}{3}\right] \cup \left[\frac{4\pi}{3},\frac{5\pi}{3}\right].
    \]
    This implies that there exists an irreducible representation $\rho \colon \fund{\compl{K}}\to SU(2)$ such that
    \[
        \rho(\mu) = \begin{bmatrix}
            e^{\frac{2\pi i k}{r}} & 0 \\ 0 & e^{-\frac{2\pi i k}{r}} 
        \end{bmatrix}, \quad \text{and hence} \quad \rho(\mu)^r=1.
    \]
    This representation factors through the group $\fund{\compl{K}}/\normalsubgroup{\mu^r}$. Thus, the latter is not an $SU(2)$-abelian group. 
\end{proof}
\end{lemma}

\begin{figure}[t]
    \centering
    \begin{subfigure}[b]{0.45\textwidth}
    \centering
        \begin{tikzpicture}[>=latex,scale=2.5]
            \draw[->] (-.3,0) -- (2.3,0) node[right] {$\theta$};
            \foreach \x /\n in {1/$\pi$,2/$2\pi$, 0.16/$\frac{\pi}{6}$, 0.5/$\frac{\pi}{2}$, 0.83/$\frac{5\pi}{6}$, 1.16/$\frac{7\pi}{6}$, 1.5/$\frac{3\pi}{2}$, 1.83/$\frac{11\pi}{6}$} \draw[shift={(\x,0)}] (0pt,2pt) -- (0pt,-2pt) node[below] {\tiny \n};
            \draw[->] (0,-.3) -- (0,2.3) node[below right] {$\psi$};
            \foreach \y /\n in {1/$\pi$,2/$2\pi$}
            \draw[shift={(0,\y)}] (2pt,0pt) -- (-2pt,0pt) node[left] {\tiny \n};
            \node[below left] at (0,0) {\tiny $0$};
             \draw (0,0) rectangle (2,2);
             \draw (1,0) -- (1,2); 
             \draw (0,1) -- (2,1);

             \draw [orange,  ultra thick] (0,0)--(2,0);
             \draw [orange,  ultra thick] (0,2)--(2,2);

             \draw [magenta, ultra thick] (1/6,0)--(1/2,2);
             \draw [magenta, ultra thick] (1/2,0)--(5/6,2);
             \draw [magenta, ultra thick] (1+1/6,0)--(1+1/2,2);
             \draw [magenta, ultra thick] (1+1/2,0)--(1+5/6,2);
             
             \draw [blue] (2/5,0)--(2/5,2);
             \draw [blue] (4/5,0)--(4/5,2);
             \draw [blue] (6/5,0)--(6/5,2);
             \draw [blue] (8/5,0)--(8/5,2);
        \end{tikzpicture}
\caption{The case of the trefoil.}
\label{figure: trefoild mu5}
\end{subfigure} \quad
\begin{subfigure}[b]{0.45\textwidth}
            \centering
        \begin{tikzpicture}[>=latex,scale=2.5]
            \draw[->] (-.3,0) -- (2.3,0) node[right] {$\theta$};
            \foreach \x /\n in {1/$\pi$,2/$2\pi$, 0.33/$\frac{\pi}{3}$, 0.5/$\frac{\pi}{2}$, 0.66/$\frac{2\pi}{3}$, 1.33/$\frac{4\pi}{3}$, 1.5/$\frac{3\pi}{2}$, 1.66/$\frac{5\pi}{3}$} \draw[shift={(\x,0)}] (0pt,2pt) -- (0pt,-2pt) node[below] {\tiny \n};
            \draw[->] (0,-.3) -- (0,2.3) node[below right] {$\psi$};
            \foreach \y /\n in {1/$\pi$,2/$2\pi$}
            \draw[shift={(0,\y)}] (2pt,0pt) -- (-2pt,0pt) node[left] {\tiny \n};
            \node[below left] at (0,0) {\tiny $0$};
             \draw (0,0) rectangle (2,2);
             \draw (1,0) -- (1,2); 
             \draw (0,1) -- (2,1);

             \draw [orange,  ultra thick] (0,0)--(2,0);
             \draw [orange,  ultra thick] (0,2)--(2,2);
             
             \draw [blue] (2/5,0)--(2/5,2);
             \draw [blue] (4/5,0)--(4/5,2);
             \draw [blue] (6/5,0)--(6/5,2);
             \draw [blue] (8/5,0)--(8/5,2);

              \draw [red] (2/6,0)--(2/6,2);
             \draw [red] (4/6,0)--(4/6,2);
              \draw [red] (6/6,0)--(6/6,2);
              \draw [red] (8/6,0)--(8/6,2);
                 \draw [red] (10/6,0)--(10/6,2);
                  \draw [red] (4/6,0)--(4/6,2);
             \draw [magenta, ultra thick] plot [smooth, tension=0.75] coordinates { (1/2,0) (2/3,1) (1/2,2)};
            \draw [magenta, ultra thick] plot [smooth, tension=0.75] coordinates { (1/2,0) (1/3,1) (1/2,2)};

            \draw [magenta, ultra thick] plot [smooth, tension=0.75] coordinates { (1+1/2,0) (1+2/3,1) (1+1/2,2)};
            \draw [magenta, ultra thick] plot [smooth, tension=0.75] coordinates { (1+1/2,0) (1+1/3,1) (1+1/2,2)};
        \end{tikzpicture}
        \caption{The case of the figure eight knot.}
        \label{figure: figure eight knot}
\end{subfigure}
\caption{The space $T(K,\partial) \subset \R{\partial \compl{K}}$ for the trefoil $3_1$ and the figure eight knot $4_1$, the sets $A_K$ and $H_{K}$ are in orange and magenta respectively. The set $\{\theta^5\equiv_{2\pi} 0\}$ is in blue and the set $\{\theta^6\equiv_{2\pi} 0$\} is in red.}
\end{figure}

\begin{lemma}\label{lemma: r=3,4 then K(r/s) not SU(2)abelian}
    Let $r \in \{1,2,3,4\}$ be an integer and $K$ a non-trivial knot. The manifold $K(\nicefrac{r}{s})$ is not $SU(2)$-abelian.
    \begin{proof}
        Since $r \le 4$, we obtain that $|\nicefrac{r}{s}| \in [0,4]$.
        If $|\nicefrac{r}{s}| \le 2$, then the conclusion holds by \cite[Theorem 1]{KronheimerMrowkaDehnSurgeryFundamental}.
        If $|\nicefrac{r}{s}|\in (2,3)$, since $r$ is a prime power, the conclusion holds by \cite[Theorem 1.8]{InstantonsLspaceSurgeries}.
        If $|\nicefrac{r}{s}|=3$, then \cite[Theorem 1.2]{baldwin2021small} implies that $K(3)$ is not $SU(2)$-abelian.
        If $|\nicefrac{r}{s}| \in (3,4]$, then $r=4$. The conclusion is implied by \cite[Theorem 1.4]{baldwin2021small}.
    \end{proof}
\end{lemma}

\begin{cor}\label{cor: un po di nodi}
    Let $K$ be a knot in the set
    \[
       \{8_{10},8_{20},9_{24},10_{59},10_{62},10_{63},10_{77},10_{82},10_{87},10_{99},10_{137},10_{140},10_{143}\}, 
    \]
    then $K$ is not $SU(2)$-abelian.
    \begin{proof}
        Let $K_0$ be either the trefoil $3_1$ or the figure eight knot $4_1$.
        In \cite{ApartialOrderInTheKnotTableII} it is proven that there exists a surjective epimorphism 
        \[
            \psi \colon \fund{\compl{K}} \twoheadrightarrow \fund{\compl{K_0}}
        \]
        such that $\psi(\mu_K)=\mu_{K_0}$ and $\psi(\lambda_K)= 1 \in \fund{\compl{K_0}}$. For a given surgery coefficient $\nicefrac{r}{s}\in\mathbb{Q}$, the map $\psi$ induces a surjective homomorphism
        \[
            \fund{K(\nicefrac{r}{s})} \twoheadrightarrow \frac{\fund{K_0}}{\normalsubgroup{\mu_{K_0}^r}}.
        \]
        
        If $|r| \le  4$, then by Lemma \ref{lemma: r=3,4 then K(r/s) not SU(2)abelian}, the manifold $K(\nicefrac{r}{s})$ is not $SU(2)$-abelian.

        If $|r| \ge 5$, then $K(\nicefrac{r}{s})$ is not $SU(2)$-abelian according to Lemma \ref{lemma: trefoil 5meridian are not SU(2)abelian} and Lemma \ref{lemma: figure eight 5 7 meridian are not SU(2)abelian}.
    \end{proof}
\end{cor}

\begin{cor}\label{corollary nodi}
    The following knots are not $SU(2)$-abelian knots
    \begin{multline*}
\{8_{10}, 8_{18}, 8_{20}, 9_{24}, 9_{37}, 9_{40}, 10_{58}, 10_{59}, 10_{60}, 10_{62},
10_{63}, 10_{74}, 10_{77},\\ 10_{82}, 10_{87}, 10_{99}, 10_{122}, 10_{136}, 10_{137}, 10_{138}, 10_{140}, 10_{143}
\}.
\end{multline*}
\begin{proof}
    The conclusion holds by Example \ref{exmp: un altro po di nodi} and Corollary \ref{cor: un po di nodi}.
\end{proof}
\end{cor}

\begin{defn}[{\cite[Defintion 1.1]{SU2AbundantLargeFormula}}]
    \label{defn: su2 abundant}
A knot $K \subset S^3$ is called $SU(2)$-abundant if the
following two conditions hold:
\begin{itemize}
    \item For all but finitely many $r \in \mathbb{Q}$, the manifold $K(r)$ is not $SU(2)$-abelian,ù
    \item For any $r=\nicefrac{p}{q} \in \mathbb{Q}$ such that $K(r)$ is $SU(2)$-abelian,
    there is some $p$-th root of unity $\zeta \in\mathbb{C}$ such that $\Delta_K(\zeta^2)=0$.
\end{itemize}
\end{defn}

\begin{rmk}
\label{rmk:su2-abundant-complement}
There are exactly $84$ prime knots with crossing number at most $9$.
Among them, only the following $7$ knots have Alexander polynomial with all coefficients in $\{-1,0,1\}$:
\[
3_1,\quad 5_1,\quad 7_1,\quad 8_{19},\quad 9_1,\quad 9_{47},\quad 9_{49}.
\]
Consequently, by \cite[Theorem 1.3]{SU2AbundantLargeFormula},
every prime knot with crossing number at most $9$ that is not in the above
list is $SU(2)$-abundant.

Among the $7$ exceptional knots, the knots
\[
3_1=T(2,3),\quad 5_1=T(2,5),\quad 7_1=T(2,7),\quad 8_{19}=T(3,4),\quad 9_1=T(2,9)
\]
are torus knots, whereas $9_{47}$ and $9_{49}$ are non-torus knots, each of bridge number $3$.
\end{rmk}

\begin{rmk}
\label{rmk:non2bridge-roots}
For an integer $n \ge 2$, we denote by $\Phi_n(t) \in \mathbb{Z}[t]$ the
$n$-th cyclotomic polynomial. This is an irreducible
polynomial over $\mathbb{Q}$ whose roots are
the primitive $n$-th root of unity.
In particular $\Phi_6(t) = t^2-t+1$, its roots are the primitive $6$-th roots of unity $e^{\pm i\pi/3}$.
We record into Table \ref{table: alexander polynomial} the Alexander polynomial factored over $\mathbb{Q}$ of some knots.
In particular, for a knot $K \subset S^3$ its Alexander polynomial $\Delta_K(t)$
has a primitive root of unity as a root of order $p$ if and only if $\Phi_{p}(t)$ appers in the decomposition of
$\Delta_K(t)$.
\end{rmk} 

\begin{table}[htbp]
    \centering
    \small
    \begin{tabular}{c|l|c}
$K$ & $\Delta_K(t)$, factored over $\mathbb{Q}$ & root of unity? \\
\hline
$8_5$   & $\Phi_6(t)(t^4-2t^3+t^2-2t+1)$ & Yes \\
$8_{15}$  & $\Phi_6(t)(3t^2-5t+3)$ & Yes  \\
$8_{16}$  & $t^6-4t^5+8t^4-9t^3+8t^2-4t+1$ & No \\
$8_{17}$  & $t^6-4t^5+8t^4-11t^3+8t^2-4t+1$ & No \\
$8_{21}$  & $(t^2-3t+1)\Phi_6(t)$ & Yes  \\
$9_{16}$  & $\Phi_6(t)(2t^4-3t^3+3t^2-3t+2)$ & Yes \\
$9_{22}$  & $t^6-5t^5+10t^4-11t^3+10t^2-5t+1$  & No \\
$9_{25}$  & $3t^4-12t^3+17t^2-12t+3$  & No \\
$9_{28}$  & $\Phi_6(t)(t^4-4t^3+7t^2-4t+1)$ & Yes  \\
$9_{29}$  & $\Phi_6(t)(t^4-4t^3+7t^2-4t+1)$ & Yes  \\
$9_{30}$  & $t^6-5t^5+12t^4-17t^3+12t^2-5t+1$ & No \\
$9_{32}$  & $t^6-6t^5+14t^4-17t^3+14t^2-6t+1$  & No \\
$9_{33}$  & $t^6-6t^5+14t^4-19t^3+14t^2-6t+1$  & No \\
$9_{34}$  & $t^6-6t^5+16t^4-23t^3+16t^2-6t+1$  & No \\
$9_{35}$  & $7t^2-13t+7$  & No \\
$9_{36}$  & $t^6-5t^5+8t^4-9t^3+8t^2-5t+1$  & No \\
$9_{38}$  & $\Phi_6(t)(5t^2-9t+5)$ & Yes  \\
$9_{39}$  & $(t^2-3t+1)(3t^2-5t+3)$ & No \\
$9_{41}$  & $(t^2-3t+3)(3t^2-3t+1)$ & No \\
$9_{42}$  & $t^4-2t^3+t^2-2t+1$  & No \\
$9_{43}$  & $t^6-3t^5+2t^4-t^3+2t^2-3t+1$  & No \\
$9_{44}$  & $t^4-4t^3+7t^2-4t+1$  & No \\
$9_{45}$  & $t^4-6t^3+9t^2-6t+1$  & No \\
$9_{46}$  & $(t-2)(2t-1)$ & No \\
$9_{48}$  & $t^4-7t^3+11t^2-7t+1$  & No \\
\end{tabular}
\caption{Decomposition over $\mathbb{Q}$ into irreducible factors of the Alexander polynomial of some knots.}
\label{table: alexander polynomial}
\end{table}

\begin{cor}
\label{cor:not-su2-abelian}
The following knots are not $SU(2)$-abelian:
\[
8_{16}, \, 8_{17},\, 9_{22},\, 9_{25},\, 9_{30},\, 9_{32},\, 9_{33},\, 9_{34},\, 9_{35},\, 9_{36},
\, 9_{39},\, 9_{41},\, 9_{42},\, 9_{43},\, 9_{44},\, 9_{45},\, 9_{46},\, \text{and } 9_{48}.
\]
\begin{proof}
By Remark \ref{rmk:su2-abundant-complement}, each
knot in the list is $SU(2)$-abundant.
By Remark \ref{rmk:non2bridge-roots}, each of these knots has $\Delta_K(t)$ with no root of unity among its roots.
Now fix such a $K$ in the list and $\nicefrac{p}{q}\in\mathbb{Q}\setminus\{0\}$.
Suppose for contradiction that $K(\nicefrac{p}{q})$ is $SU(2)$-abelian.
By Definition \ref{defn: su2 abundant} there must exist a $p$-th root of unity $\zeta$ with
$\Delta_K(\zeta^2)=0$. But $\zeta^2$ is itself a root of unity,
contradicting the fact that $\Delta_K(t)$ has no root of unity as a root.
Hence $K(\nicefrac{p}{q})$ admits an irreducible $SU(2)$ representation,
for every $\nicefrac{p}{q}\in\mathbb{Q}\setminus\{0\}$.
Since the $0$-surgery is never $SU(2)$-abelian when $K$ is non-trivial by \cite[Theorem 1.2]{KronheimerMrowkaDehnSurgeryFundamental},
we conclude that if $K$ is in the list, then it does not admit a non-trivial $SU(2)$-abelian surgery.
\end{proof}
\end{cor}

The group $SU(2)$ can be viewed either as the classical matrix group of $2\times 2$
unitary matrices of determinant one, or equivalently as the group of unit quaternions.
In the next definition we shall use the second description.
Further details may be found in \cite{saveliev}.

\begin{defn}
The binary tetrahedral group $2T \subset SU(2)$
is the subgroup generated by the unit quaternions
\[
s = \tfrac{1}{2}(1+i+j+k) \in SU(2), \qquad t = \tfrac{1}{2}(1+i+j-k) \in SU(2).
\]
\end{defn}
One can notice that $s^3=t^3=(st)^2=-1$.
Equivalently,
\[
2T=\{\pm1,\pm i,\pm j,\pm k\} \cup \{\tfrac{1}{2}(\pm1\pm i\pm j\pm k)\}.
\] 
This is a finite non-abelian subgroup of $SU(2)$ of order $24$.

\begin{table}[htbp]
\centering
\small
\begin{tabular}{c|L{4.6cm}|L{1.8cm}|L{5.6cm}}
$K$ & $\pi_1(S^3\setminus K)$ & $\mu$ & $\lambda$ \\
\hline
$8_5$ &
$\langle a,b,c \mid{}$\seqsplit{aacAABAcaab}$,\ $\seqsplit{abCBacb}$\rangle$ &
\seqsplit{ABCA} &
\seqsplit{acAACBCabaacccccccccB} \\[4pt]


$8_{15}$ &
$\langle a,b,c \mid{}$\seqsplit{abbaBAbACAccB}$,\ $\seqsplit{aCacBabbCAcACac}$\rangle$ &
$A$ &
\seqsplit{AAAAAAAAAcaCacBaBAbCaCacaB} \\[4pt]



$8_{21}$ & $\langle a,b \mid{}$\seqsplit{aaBBBaaBAbbbAAbaBBabAAbbbAB}$\rangle$ & $\mu={}$\seqsplit{Ab} & $\lambda={}$\seqsplit{AbAbAbaBBBaaBAbbAbbABaaBBBab} \\[4pt]

$9_{16}$ &
$\langle a,b \mid{}$\seqsplit{aabbaBabbaabABBAABBAbaabbaBabbaabABBAABabbaabbaBAABBAbABBAABabbaabbaBAABBAb}$\rangle$ &
\seqsplit{aabba} &
\seqsplit{BabbaabABBAABBAbaabbaBabbaabABBAABababababababababababababaBAABBAbaabbaBabbaabABBAABBAbaabbaBA} \\[4pt]


$9_{28}$ &
$\langle a,b \mid{}$\seqsplit{aabABBAABabbaBAABBAbaabbaBAABBBAABabbaabABBAABabbaBAABBAbaabbbaabABBAABabbaabABBAbaabbaBAABBAbaabbb}$\rangle$ &
\seqsplit{ba} & \seqsplit{babababABBAbaabbaBAABBBAABabbaabABBAABabbaBAABBAbaabbabbaabABBAABabbaBAABBAbaabbaBAABBBAABabbaabABBA} \\[4pt]

$9_{29}$ &
$\langle a,b,c \mid{}$\seqsplit{acbCbcABabcbC}$,\ $\seqsplit{acbCAbaBAbabABabcABabAB}$\rangle$ &
$B$ &
\seqsplit{BBBBBBCAbaBAbabCBacbCAbaBA} \\[4pt]

$9_{38}$ &
$\langle a,b,c \mid{}$\seqsplit{accaBabbC}$,\ $\seqsplit{abaBAbABAbCacBabaBabABAbCAcBabaBabAbCacB}$\rangle$ &
$A$ &
\seqsplit{cBabaBabABAbCAcBabCaBabaBAbABAbCacBabaBAAAAAAA} \\[4pt]

\end{tabular}
\caption{Some knot group presentations, computed with SnapPy \cite{SnapPy}, and the corresponding meridian and  null-homologous longitude. In this table
capitol letters indicate the inverse of the generator, meaning that $A= a^{-1}$.}
\label{table: presentation}
\end{table}

\begin{table}[h]
\centering
\small
\begin{tabular}{c|l}
$K$ & $\rho:\pi_1(S^3\setminus K)\to 2T$ \\
\hline
$8_5$   & $a\mapsto\tfrac12(1+i+j+k),\ b\mapsto-\tfrac12(1+i+j+k),\ c\mapsto\tfrac12(1-i-j+k)$ \\
$8_{15}$  & $a\mapsto\tfrac12(1+i+j+k),\ b\mapsto\tfrac12(1-i-j+k),\ c\mapsto i$ \\
$8_{21}$ & $a\mapsto\tfrac12(1+i+j+k),\ b\mapsto j$ \\
$9_{16}$  & $a\mapsto j,\ b\mapsto\tfrac12(-1+i+j+k)$ \\
$9_{28}$ & $a\mapsto\tfrac12(1+i+j+k),\ b\mapsto-i$ \\

\end{tabular}
\caption{Some irreducible representations killing the slope $(\mu)^{12}\lambda \subset \partial \compl{K}$.
The presentation of the groups are in Table \ref{table: presentation}.}
\label{tab:2T-12surgery-only}
\end{table}

\begin{prop}
\label{prop:8_5-8_15-9_16-never-abelian}
The knots $8_5,8_{15}, 8_{21}$, and $9_{28}$ are not $SU(2)$-abelian.
\begin{proof}
Let $K$ be one of the knots above. We are going to prove that for every $\nicefrac{p}{q} \in \mathbb{Q}$
the manifold $K(\nicefrac{p}{q})$ is not $SU(2)$-abelian.
By \cite{ApartialOrderInTheKnotTableII}, there is a surjective homomorphism $\varphi:\fund{\compl{K}}\twoheadrightarrow \fund{\compl{3_1}}$
with
\[
\varphi(\mu)=\mu', \qquad \varphi(\lambda)=(\lambda')^{\pm2},
\]
where $\mu,\lambda$ (resp.\ $\mu',\lambda'$) denote the meridian and null-homologous longitude of $K$ (resp. of $3_1$).

\textbf{Case 1: $12\nmid p$.} According to Remark \ref{rmk:su2-abundant-complement} the knot
$K$ is $SU(2)$-abundant.
By Definition \ref{defn: su2 abundant},
if $K(p/q)$ is $SU(2)$-abelian, then
there exists a $p$-th root of unity $\zeta$ with $\Delta_K(\zeta^2)=0$.
As shown in Table \ref{table: alexander polynomial}, the $\zeta^2$ must be a primitive $6$-th root of unity.
In particular, this means that $\mathrm{ord}(\zeta^2)=6$.
Since $\mathrm{ord}(\zeta^2)=\mathrm{ord}(\zeta)/\gcd(\mathrm{ord}(\zeta),2)$, this forces $\mathrm{ord}(\zeta)=12$.
This implies that $12\mid p$, which contradicts our assumption.
So $K(p/q)$ admits an irreducible representation.

\textbf{Case 2: $12\mid p$, say $p=12m$.}
Suppose for contradiction that $K(p/q)$ is $SU(2)$-abelian. The epimorphism $\varphi$
induces the following epimorphism
\[ 
    \fund{K(12m/q)} = \frac{\fund{\compl{K}}}{\normalsubgroup{\mu^{12m} \lambda^q}} \twoheadrightarrow \frac{\fund{\compl{3_1}}}{\normalsubgroup{(\mu')^{12m} (\lambda')^{\pm 2q}}}
    \twoheadrightarrow \frac{\fund{\compl{3_1}}}{\normalsubgroup{(\mu')^{6m} (\lambda')^{\pm q}}} = \fund{3_1(\pm 6m/q)}.
\]
We recall that $3_1=T(2,3)$.
According to Theorem \ref{thm:iterated-cables}, if $|\pm 6m/q| \neq 6$, then $\fund{3_1(\pm 6m/q)}$ admits an irreducible
$SU(2)$-representation. 
This implies that if $12m / q \neq 12$, then $\fund{K(12m/q)}$ admits an irreducible $SU(2)$-representation
by composition.
Table \ref{tab:2T-12surgery-only} provides an irreducible $SU(2)$-representation of $\fund{K(12)}$.
We conclude
that $K$ does not admit a non-trivial $SU(2)$-surgery.
\end{proof}
\end{prop}

\begin{prop}\label{prop: knots 9 29 and 9 38}
    The knots $9_{29}$ and $9_{38}$ are not $SU(2)$-abelian.
\begin{proof}
Let $K$ be one of the knots above.
As before, Table \ref{table: alexander polynomial} shows that $\Delta_K(t)$ has $\Phi_6(t)$ as its unique cyclotomic factor
and by Remark \ref{rmk:su2-abundant-complement} the knot $K$ is $SU(2)$-abundant.
By the same argument as in Proposition \ref{prop:8_5-8_15-9_16-never-abelian},
if $12\nmid p$, then $K(p/q)$ admits an irreducible $SU(2)$ representation.

It remains to treat the case $12\mid p$.
Table \ref{tab:929-938-progression} shows for both knots an explicit non-abelian representation $\rho:\pi_1(S^3\setminus K)\to 2T\subset SU(2)$ satisfying
\[
\rho(\lambda)=1 \qquad\text{and}\qquad \rho(\mu)^6=1.
\]
For such $\rho$ and any $p\equiv0\pmod6$, we have
\[
\rho(\mu)^p\rho(\lambda)^q = \rho(\mu)^p = \big(\rho(\mu)^6\big)^{p/6} = 1,
\]
so $\rho$ kills $\mu^p\lambda^q$ for every $q$,
and therefore descends to an irreducible representation of $\pi_1(K(p/q))\cong\fund{\compl{K}}/\normalsubgroup{\mu^p\lambda^q}$.
In particular this covers every $p$ with $12\mid p$.

Combining the two cases, $K(p/q)$ admits an irreducible $SU(2)$ representation for every $p/q\in\mathbb{Q}$.
As always, the conclusion is given by \cite[Theorem 1.1]{KronheimerMrowkaDehnSurgeryFundamental} where it is proven that
$K(0)$ is not $SU(2)$-abelian.
\end{proof}
\end{prop}

\begin{table}[h]
\centering
\small
\begin{tabular}{c|l}
$K$ & $\rho:\pi_1(S^3\setminus K)\to 2T$ \\
\hline
$9_{29}$&
$a\mapsto\tfrac12(-1+i+j+k),\ b\mapsto\tfrac12(1-i-j+k),\ c\mapsto-\tfrac12(1+i+j+k)$ \\[6pt]
$9_{38}$ &
$a\mapsto\tfrac12(1+i+j+k),\ b\mapsto-k,\ c\mapsto-i$ \\
\end{tabular}
\caption{Representations satisfying $\rho(\lambda)=1$, $\rho(\mu)^6=1$.
The presentation of the groups are in Table \ref{table: presentation}}
\label{tab:929-938-progression}
\end{table}

\begin{repcor}{cor: knots up to 9 crosssings}
    Let $K$ be a knot with at most $9$ crossings. If $K$ is neither a torus knot nor in $\{9_{47},9_{49}\}$,
    then
    $K$ is not $SU(2)$-abelian.
    \begin{proof}
        If $K$ is a $2$-bridge knot, then the conclusion is given by \cite[Theorem 1.1]{bowden2007winding}.
        The conclusion of the remaining knots is summarized in Table \ref{tab:su2-master}.
    \end{proof}
\end{repcor}

\begin{table}[h]
\centering
\small
\begin{minipage}{0.48\textwidth}
\centering
\begin{tabular}{c c l}
\toprule
\textbf{Knot} & \textbf{$SU(2)$-abelian?} & \textbf{Why?} \\
\midrule
$3_1$    & Yes & torus knot \\
$4_1$    & No  & 2-bridge \\
$5_1$    & Yes & torus knot\\
$5_2$    & No  & 2-bridge \\
$6_1$    & No  & 2-bridge \\
$6_2$    & No  & 2-bridge \\
$6_3$    & No  & 2-bridge \\
$7_1$    & Yes & torus knot\\
$7_2$    & No  & 2-bridge \\
$7_3$    & No  & 2-bridge \\
$7_4$    & No  & 2-bridge \\
$7_5$    & No  & 2-bridge \\
$7_6$    & No  & 2-bridge \\
$7_7$    & No  & 2-bridge \\
$8_1$    & No  & 2-bridge \\
$8_2$    & No  & 2-bridge \\
$8_3$    & No  & 2-bridge \\
$8_4$    & No  & 2-bridge \\
$8_5$    & No   & Proposition \ref{prop:8_5-8_15-9_16-never-abelian}          \\
$8_6$    & No  & 2-bridge \\
$8_7$    & No  & 2-bridge \\
$8_8$    & No  & 2-bridge \\
$8_9$    & No  & 2-bridge \\
$8_{10}$   &  No  &  Corollary \ref{corollary nodi}         \\
$8_{11}$   & No  & 2-bridge \\
$8_{12}$   & No  & 2-bridge \\
$8_{13}$   & No  & 2-bridge \\
$8_{14}$   & No  & 2-bridge \\
$8_{15}$   & No    & Proposition \ref{prop:8_5-8_15-9_16-never-abelian}         \\
$8_{16}$   & No   & Corollary \ref{cor:not-su2-abelian}         \\
$8_{17}$   &  No   & Corollary \ref{cor:not-su2-abelian}         \\
$8_{18}$   &  No   &  Corollary \ref{corollary nodi}         \\
$8_{19}$   & Yes & torus knot\\
$8_{20}$   &  No   & Corollary \ref{corollary nodi}         \\
$8_{21}$   &  No   & Proposition \ref{prop:8_5-8_15-9_16-never-abelian}         \\
$9_1$    & Yes & torus knot\\
$9_2$    & No  & 2-bridge \\
$9_3$    & No  & 2-bridge \\
$9_4$    & No  & 2-bridge \\
$9_5$    & No  & 2-bridge \\
$9_6$    & No  & 2-bridge \\
$9_7$    & No  & 2-bridge \\
\bottomrule
\end{tabular}
\end{minipage}
\hfill
\begin{minipage}{0.48\textwidth}
\centering
\begin{tabular}{c c l}
\toprule
\textbf{Knot} & \textbf{$SU(2)$-abelian?} & \textbf{Why?} \\
\midrule
$9_8$    & No  & 2-bridge \\
$9_9$    & No  & 2-bridge \\
$9_{10}$   & No  & 2-bridge \\
$9_{11}$   & No  & 2-bridge \\
$9_{12}$   & No  & 2-bridge \\
$9_{13}$   & No  & 2-bridge \\
$9_{14}$   & No  & 2-bridge \\
$9_{15}$   & No  & 2-bridge \\
$9_{16}$   & No  & Proposition \ref{prop:8_5-8_15-9_16-never-abelian}         \\
$9_{17}$   & No  & 2-bridge \\
$9_{18}$   & No  & 2-bridge \\
$9_{19}$   & No  & 2-bridge \\
$9_{20}$   & No  & 2-bridge \\
$9_{21}$   & No  & 2-bridge \\
$9_{22}$   & No   & Corollary \ref{cor:not-su2-abelian}         \\
$9_{23}$   & No  & 2-bridge \\
$9_{24}$   & No  & Corollary \ref{corollary nodi}         \\
$9_{25}$   &  No   & Corollary \ref{cor:not-su2-abelian}         \\
$9_{26}$   & No  & 2-bridge \\
$9_{27}$   & No  & 2-bridge \\
$9_{28}$   &  No   & Proposition \ref{prop:8_5-8_15-9_16-never-abelian}         \\
$9_{29}$   & No & Proposition \ref{prop: knots 9 29 and 9 38}       \\
$9_{30}$   &  No   & Corollary \ref{cor:not-su2-abelian}         \\
$9_{31}$   & No  & 2-bridge \\
$9_{32}$   & No    &   Corollary \ref{cor:not-su2-abelian}       \\
$9_{33}$   &  No   & Corollary \ref{cor:not-su2-abelian}         \\
$9_{34}$   &  No   &  Corollary \ref{cor:not-su2-abelian}         \\
$9_{35}$   &  No   &  Corollary \ref{cor:not-su2-abelian}         \\
$9_{36}$   &  No   &  Corollary \ref{cor:not-su2-abelian}        \\
$9_{37}$   &  No   &  Corollary \ref{corollary nodi}         \\
$9_{38}$   &  No   & Proposition \ref{prop: knots 9 29 and 9 38}         \\
$9_{39}$   &  No   & Corollary \ref{cor:not-su2-abelian}         \\
$9_{40}$   &  No   & Corollary \ref{corollary nodi}        \\
$9_{41}$   &  No   & Corollary \ref{cor:not-su2-abelian}         \\
$9_{42}$   &  No   & Corollary \ref{cor:not-su2-abelian}         \\
$9_{43}$   &  No   & Corollary \ref{cor:not-su2-abelian}         \\
$9_{44}$   &  No   & Corollary \ref{cor:not-su2-abelian}         \\
$9_{45}$   &  No   & Corollary \ref{cor:not-su2-abelian}         \\
$9_{46}$   &  No   & Corollary \ref{cor:not-su2-abelian}         \\
$9_{47}$   &  ???   &          \\
$9_{48}$   &  No   & Corollary \ref{cor:not-su2-abelian}         \\
$9_{49}$   &  ???   &          \\
\bottomrule
\end{tabular}
\end{minipage}
\caption{$SU(2)$-abelian status of prime knots with crossing number $\le 9$.}
\label{tab:su2-master}
\end{table}

\printbibliography
\end{document}